\documentclass[a4paper,11pt]{article}

\bibliographystyle{plain}

\usepackage{amssymb}
\usepackage{latexsym}
\usepackage{amsmath,amsfonts,bm}
\usepackage{mathrsfs}
\usepackage{enumerate, geometry}
\usepackage{graphicx,psfrag}
\usepackage{amsthm}
\usepackage{color}

\usepackage{pstricks}

\usepackage{xspace}

\usepackage{color}

\geometry{letterpaper, margin=3.cm}

\newcommand{\note}[1]%
{\noindent\centerline{\fbox{\parbox{.9\textwidth}{\textbf{#1}}}}}
\newcommand{\snote}[1]%
{\fbox{\textbf{#1}}}

\def\proof{\par{\noindent\it Proof}. \ignorespaces}
\def\endproof{\vbox{\hrule height0.6pt\hbox{%
   \vrule height1.3ex width0.6pt\hskip0.8ex
   \vrule width0.6pt}\hrule height0.6pt}}

%\newtheorem{theorem}{Theorem}[section]
%\newtheorem{lemma}[theorem]{Lemma}
%
%\theoremstyle{definition}
%\newtheorem{definition}[theorem]{Definition}
%\newtheorem{example}[theorem]{Example}
%\newtheorem{xca}[theorem]{Exercise}
%
%\theoremstyle{remark}
%\newtheorem{remark}[theorem]{Remark}
%
%\numberwithin{equation}{section}

\newcommand{\bnew}[1]{{\color{black}{#1}}}
\newcommand{\rnew}[1]{{\color{black}{#1}}}

\input definitions.sty

\begin{document}

\title{Contraction and optimality properties\\ of adaptive Legendre-Galerkin methods:\\ the $1$-dimensional case}

\author{Claudio Canuto$^a$, Ricardo H. Nochetto$^b$ and Marco Verani$^c$}

\date{June 24, 2012}

\maketitle 

%%%%%%%%%%%%%% ADDRESSES %%%%%%%%%%%
\begin{center}
{\small
$^a$ Dipartimento di Scienze Matematiche, Politecnico di Torino\\
        Corso Duca degli Abruzzi 24, 10129 Torino, Italy\\
	E-mail: {\tt claudio.canuto@polito.it}\\
\vskip 0.1cm
%%%%%%%%%%%%%%%%%%%%%%%%%%%%%%%%
Department of Mathematics and Institute for Physical Science
  and Technology,\\ University of Maryland, College Park, MD 20742, USAy\\
E-mail: {\tt rhn@math.umd.edu} 
\vskip 0.1cm
%%%%%%%%%%%%%%%%%%%%%%%%%%%%%%%%%
$^c$ MOX, Dipartimento di Matematica, Politecnico di Milano\\
Piazza Leonardo da Vinci 32, I-20133 Milano, Italy\\
E-mail: {\tt marco.verani@polimi.it}\\
}
\end{center}

\begin{abstract}
\noindent As a first step towards a mathematically rigorous understanding of
\rnew{adaptive spectral/$hp$} discretizations of elliptic boundary-value
problems, we study the performance of adaptive Legendre-Galerkin methods
in one space dimension. These methods offer unlimited approximation power
only restricted by solution and data regularity. Our investigation is
inspired by a similar study that we recently \rnew{carried out} for
Fourier-Galerkin methods in a periodic box. We first consider an ``ideal''
algorithm, which we prove to be convergent at a fixed rate. Next we
enhance its performance, \rnew{consistently} with the expected fast error decay of
high-order methods, by activating a larger set of degrees of freedom at
each iteration. We guarantee optimality (in the non-linear approximation
sense) by incorporating a coarsening step. Optimality is measured in terms
of certain sparsity classes of the Gevrey type, which describe a
(sub-)exponential decay of the best approximation error.

\textbf{Keywords:} Spectral methods, adaptivity, convergence, optimal cardinality.
\end{abstract}

%%%%%%%%%%%%%%%%%%%%%%%%%%%%%%%%%%%%%%%%%%%%%
\section{Introduction}

The mathematical theory of adaptive algorithms for approximating the solution of multidimensional 
elliptic PDEs is rather recent. The first convergence
results of adaptive finite element methods (AFEM) have been proved by D\"orfler \cite{dorfler:96} and Morin, Nochetto, and Siebert \cite{MNS:00}.
On  the other hand, the first convergence rates were derived by Cohen, Dahmen, and DeVore
\cite{CDDV:1998} for wavelets in any dimensions $d$, and for finite element
methods (AFEM)
by Binev, Dahmen, and DeVore \cite{BDD:04} for
$d=2$ and Stevenson \cite{Stevenson:2007} for any $d$. 
The most comprehensive results
for AFEM are those of Casc\'on, Kreuzer, Nochetto,
and Siebert \cite{Nochetto-et-al:2008} for any $d$ and $L^2$ data,
and Cohen, DeVore, and Nochetto \cite{CDN:11} for $d=2$ and $H^{-1}$
data. The key result of this theory is that   
AFEM delivers a convergence rate compatible with that 
of the approximation classes where the solution and data 
belong. The recent results in \cite{CDN:11} reveal that it is the
approximation class of the solution that really matters. However, 
in all the above cases (\rnew{practical} wavelets and FEM)
the convergence rates are limited by the approximation power of
the method, which is finite and related to the
polynomial degree of the basis functions or the number of their vanishing moments, as well as the 
regularity of the solution and data.
\rnew{We refer to the surveys \cite{NSV:09} by Nochetto, Siebert and Veeser
for AFEM and \cite{Stevenson:09} by Stevenson for adaptive wavelets.}

A similar study for adaptive high-order methods (such as spectral element methods or $hp$ finite element methods) has started only recently and very few results are available. The ultimate goal 
is to design algorithms which {\sl optimally} choose between $h$-refinement and 
$p$-enrichment, and for which a rigorous mathematical proof of rate of
convergence and optimality can be given. This is a formidable task
which requires, \rnew{among other things}, the study of sparsity
classes tailored to functions which are (locally) infinitely
smooth. Some rigorous mathematical results on the convergence of
$hp$-methods for PDEs have been established 
\rnew{recently} in \cite{Schmidt-Siebert:00,Doerfler-Heuveline:07, Burg-Doerfler:11}. Although the numerical implementations of adaptive $hp$ methods has now reached high levels of efficiency (see, e.g., \cite{Rachowicz-etal:06}), the theoretical study of optimality has never been addressed. A first step in this direction has been accomplished in \cite{CNV:2011}, where the contraction and the optimal cardinality properties of 
adaptive Fourier-Galerkin methods in a
periodic box in $\mathbb{R}^d$ with dimension $d\ge1$ are presented 
together with the analysis of suitable  nonlinear approximation classes 
(the classical one corresponding \rnew{to} algebraic decay of the Fourier coefficients and another one associated \rnew{with} exponential decay).

The present paper represents a second step towards the study of optimality for high-order methods. We extend the method and the results contained in \cite{CNV:2011} to a non-periodic setting in one dimension. This is the closest situation to the periodic case, since an orthonormal basis is readily available (the so called Babu\v ska-Shen basis formed by the anti-derivatives of the Legendre polynomials); together with the associated dual basis, it allows one to represent the norm of a function or a functional as a $\ell_2$-type norm of the vector of its coefficients. In addition, the stiffness matrix for smooth coefficients of the differential operator exhibits a quasi-sparsity behavior, i.e., an exponential decay of its entries as one goes away from the diagonal.

In this paper we only consider the case of an exponential decay of the best approximation error of the solution of the PDE; indeed this is the most relevant situation which motivates the use of a spectral/$p$ method. Our approach relies on a careful analysis of the relation between the sparsity class of a function and the sparsity class of its image through the differential operator. As already pointed out in the analysis of the Fourier method \cite{CNV:2011}, the discrepancy between the sparsity classes of the residual and the exact solution suggests the introduction of a coarsening step that guarantees the optimality of the computed approximation at the end of each adaptive iteration.

The multi-dimensional situation, which poses additional difficulties, is currently under investigation.

%%%%%%%%%%%%%%%%%%%%%%%%%%%%%%%%%%%%%%%%%%%
\section{Legendre and Babu\v ska-Shen bases}
%%%%%%%%%%%%%%%%%%%%%%%%%%%%%%%%%%%%%%%%%%%%

Let $I=(-1,1)$ and $L_k({x})$, $k \geq 0$, be the $k$-th Legendre orthogonal polynomial in $I$, which satisfies  ${\rm deg}\, L_k = k$, $L_k(1)=1$ and
$$
\int_I L_k({x}) L_m({x}) \, d{x} = \frac2{2k+1}\, \delta_{km}\;, \qquad m \geq 0 \;.
$$
Furthermore, we denote by 
$$
\phi_k(x)= \sqrt{k+1/2}\  L_k(x)\;, \qquad k \geq 0 \;,
$$ 
the elements of the orthonormal {\sl Legendre basis} in $L^2(I)$, which satisfy 
$$ 
\int_I \phi_k (x) \phi_m(x) dx =\delta_{km}\ ,\quad m\geq 0 .
$$
We denote by $D = d/dx$ the first derivative operator. 
The natural modal basis in $H^1_0(I)$ is the {\sl Babu\v ska-Shen basis} (BS basis), whose elements are defined as
\begin{equation}\label{eq:defBS}
\eta_k({x})=\sqrt{k-1/2}\int_{{x}}^1 L_{k-1}(s)\,{d}s =
\frac1{\sqrt{4k-2}}\big(L_{k-2}({x})-L_{k}({x})\big) 
 \qquad k \geq 2\;;
\end{equation}
they satisfies ${\rm deg}\, \eta_k = k$ and
\begin{equation}\label{eq:propBS.05}
D \eta_k =-\phi_{k-1} \;.
\end{equation}
Thus, the $\eta_k$'s satisfy
\begin{equation}\label{eq:propBS.1}
(\eta_k,\eta_m)_{H^1_0({I})} = 
\int_I D \eta_k({x}) D \eta_m({x}) \, d{x} = \delta_{km}\;, \qquad k,m \geq 2 \;,
\end{equation}
i.e., they form an orthonormal basis for the ${H^1_0({I})}$-inner product.

Equivalently, the (semi-infinite) stiffness matrix $\bm{S}_\eta$ of the Babu\v ska-Shen basis with respect to this inner product is
the identity matrix $\bm{I}$. On the other hand, one has
\begin{equation}\label{eq:propBS.2}
(\eta_k,\eta_m)_{L^2({I})} = 
\begin{cases}
\frac2{(2k-3)(2k+1)} & \text{if } m=k \;, \\
- \frac1{(2k+1)\sqrt{(2k-1)(2k+3)}} &  \text{if } m=k+2 \;, \\
0 & \text{elsewhere.}
\end{cases}  \qquad \text{for } k \geq m \;, 
\end{equation}
which means that the mass matrix $\bm{M}_\eta$ is pentadiagonal. (Since even and odd modes are mutually orthogonal,
the mass matrix could be equivalently represented by a couple of tridiagonal matrices, each one collecting the inner
products of all modes with equal parity).  
For every $v\in H^1_0(I)$ we have $$ v(x)=\sum_{k=2}^\infty \hat{v}_k \eta_k(x)$$
with $\hat{v}_k=\int_{-1}^1 D v(x)D\eta_k(x) dx$. In view of the results in the next sections, we  
observe that (\ref{eq:propBS.05}) yields
\begin{equation}
Dv  =\sum_{k=2}^\infty \hat{v}_k D \eta_k = 
- \sum_{k=2}^\infty \hat{v}_k \phi_{k-1} \;; 
\end{equation}
comparing this expression with 
$$
Dv=\sum_{h=1}^\infty {(Dv)}^{\wedge}_h \phi_h 
$$ 
yields
\begin{equation}\label{aux:coeff}
(Dv)^\wedge_h=-\hat{v}_{h+1} \qquad\forall h\geq 1 \;.
\end{equation}

From \eqref{eq:propBS.1}, there follows that the $H^1_0({I})$-norm can be expressed, according to the Parseval identity, as
\begin{equation}\label{eq:propBS.3}
\Vert v \Vert_{H^1_0({I})}^2  =  \sum_{k= 2}^\infty |\hat{v}_k|^2 = \bm{v}^T \bm{v}\;,
\end{equation}
where the vector $\bm{v}=(\hat{v}_k)$ collects the coefficients of $v$. The $L^2(I)$-norm of $v$ is
given by
\begin{equation}\label{eq:propBS.3bis}
\Vert v \Vert_{L^2({I})}^2  = \bm{v}^T  \bm{M}_\eta \, \bm{v}\;.
\end{equation}
Correspondingly, any element $f \in H^{-1}({I})$ can be expanded 
\bnew{in terms of} the {\sl dual Babu\v ska-Shen basis}, whose elements
$\eta_k^*$, $k \geq 2$, are defined by the conditions
$$ 
\langle \eta_k^* , v \rangle = \hat{v}_k \qquad \forall v \in H^1_0({I}) \;;
$$ 
precisely one has
$$
f = \sum_{k = 2}^\infty \hat{f}_k \eta_k^* \;, \qquad  \text{with} \ \ \hat{f}_k = \langle f,\eta_k \rangle \;,
$$
and its $H^{-1}({I})$-norm can be expressed, according to the Parseval identity, as
\begin{equation}\label{eq:propBS.4}
\Vert f \Vert_{H^{-1}({I})}^2  =  \sum_{k= 2}^\infty |\hat{f}_k|^2 \;.
\end{equation}

Summarizing, we see that the one-dimensional Legendre case is similar, 
from the point of view of 
expansions and norm representations, to the Fourier case (see \cite{CNV:2011}).

Throughout the paper, we will use the notation $\Vert \ . \ \Vert$ to
indicate both the $H^1_0(I)$-norm 
of a function $v$, or the $H^{-1}(I)$-norm of a linear
form $f$; the specific meaning will be clear from the context.

Moreover, given any finite index set $\Lambda \subset {\mathbb{N}_2}:=\{k\in\mathbb{N}:~k\geq 2\}$, we define
the subspace of $V:=H^1_0(I)$
$$
V_{\Lambda} := {\rm span}\,\{\eta_k\, | \, k \in \Lambda \}\;;
$$
we set $|\Lambda|= \rm{card}\, \Lambda$, so that $\rm{dim}\, V_{\Lambda}=|\Lambda|$. If $g$ admits an
expansion $g = \sum_{k=2}^\infty \hat{g}_k \eta_k $ (converging in an appropriate norm), then we define its 
projection $P_\Lambda g$ \bnew{onto} $V_\Lambda$ by setting
$$
P_\Lambda g = \sum_{k \in \Lambda} \hat{g}_k \eta_k \;.
$$

%%%%%%%%%%%%%%%%%%%%%%%%%%%%%%%%%%%%%%%%%%%
\section{The model problem and its Galerkin discretization}

We now consider the elliptic problem
\begin{equation}\label{eq:four03}
\begin{cases} 
Lu=-D \cdot (\nu D u)+ \sigma u = f & \text{in } I \;, \\
u(-1) = u(1) = 0  \;, 
\end{cases} 
\end{equation}
where $\nu$ and $\sigma$ are sufficiently smooth real coefficients satisfying 
$0 < \nu_* \leq \nu(x) \leq \nu^* < \infty$ and $0 \leq \sigma(x) \leq \sigma^* < \infty$
in $I$; let us set
$$
\alpha_* = \nu_* \qquad \text{and} \qquad \alpha^* = \max(\nu^*, \sigma^*) \;.
$$
We formulate this problem variationally as
\begin{equation}\label{eq:four.1}
u \in H^1_0(I) \ \ : \quad a(u,v)= \langle f,v \rangle \qquad \forall v \in  H^1_0(I) \;,
\end{equation}
where $a(u,v)=\int_I \nu D u D{v} + \int_I\sigma u {v}$. We denote by 
$\tvert v \tvert = \sqrt{a(v,v)}$
the energy norm of any $v \in H^1_0(I)$, which satisfies 
\begin{equation}\label{eq:four.1bis}
\sqrt{\alpha_*}  \Vert v \Vert  \leq \tvert v \tvert \leq 
\sqrt{\alpha^*}  \Vert v \Vert \;.
\end{equation}

Given any finite set $\Lambda \subset \mathbb{N}_2$, 
the Galerkin approximation is defined as
\begin{equation}\label{eq:four.2}
u_\Lambda \in V_\Lambda \ \ : \quad a(u_\Lambda,v_\Lambda)= 
\langle f,v_\Lambda \rangle \qquad \forall v_\Lambda \in V_\Lambda \;.
\end{equation}
For any $w \in V_\Lambda$, we define the residual
$$
r(w)=f-Lw = \sum_k \hat{r}_k(w) \bnew{\eta_k^*} \;, \qquad \text{where} \qquad 
\hat{r}_k(w) = \langle f - Lw, \eta_k \rangle = \langle f,\eta_k \rangle -a(w,\eta_k) \;.
$$
Then, the previous definition of $u_\Lambda$ is equivalent to the condition
\begin{equation}\label{eq:four.2.1ter}
P_\Lambda r(u_\Lambda) = 0 \;, \qquad \text{i.e., } 
\quad \hat{r}_k(u_\Lambda)=0 \qquad \forall k \in \Lambda \;.
\end{equation}
On the other hand, by the continuity and coercivity of the bilinear
form $a$, one has 
\begin{equation}\label{eq:four.2.1}
\frac1{\alpha^*} \Vert r(u_\Lambda) \Vert \leq
\Vert u - u_\Lambda \Vert \leq 
\frac1{\alpha_*} \Vert r(u_\Lambda) \Vert \;,
\end{equation}
or, equivalently,
\begin{equation}\label{eq:four.2.1bis}
\frac1{\sqrt{\alpha^*}} \Vert r(u_\Lambda) \Vert \leq
\tvert u - u_\Lambda \tvert \leq 
\frac1{\sqrt{\alpha_*}} \Vert r(u_\Lambda) \Vert \;.
\end{equation}

%%%%%%%%%%%%%%%%%%%%%%%%%%%%%%%%%
\subsection{Algebraic representation and properties of the stiffness matrix}\label{sec:algebraic_repres}

Let us identify the solution $u = \sum_k \hat{u}_k \eta_k$ of Problem (\ref{eq:four.1})
with the vector $\mathbf{u}=(\hat{u}_k)$ 
of its Babu\v ska-Shen (BS) coefficients. Similarly, let us identify
the right-hand side $f$ with the vector $\mathbf{f}=(\hat{f}_\ell)$ of its BS coefficients.
Finally, let us introduce the semi-infinite, symmetric and positive-definite matrix 
\begin{equation}\label{eq:four100}
\mathbf{A}=(a_{\ell,k}) \qquad \text{with} \qquad 
a_{\ell,k}= a(\eta_k,\eta_\ell)_{k, \ell \geq 2} \;.
\end{equation}
Then, Problem (\ref{eq:four.1}) can be equivalently written as
\begin{equation}\label{eq:four110}
\mathbf{A} \mathbf{u} = \mathbf{f} \;. 
\end{equation}
In order to study the properties of the matrix ${\mathbf A}$, let us
first recall \bnew{the following result}. 
\begin{property}[\bnew{product of Legendre polynomials}]
There holds
\begin{equation}\label{product}
L_m(x)L_n(x)=\sum_{r=0}^{\min (m,n)} A_{m,n}^r L_{m+n-2r} (x)
\end{equation}
with 
$$ A_{m,n}^r:= \frac{A_{m-r} A_r A_{n-r}}{A_{n+m-r}}\frac{2n+2m-4r+1}{2n+2m-2r+1}$$
and $$ A_0:=1\ , \qquad A_m:=\frac{1\cdot 3 \cdot 5 \ldots (2m-1)}{m !}=\frac{(2m)!}{2^m (m!)^2} \ .$$
\end{property}
\begin{proof}
See e.g. \cite{Adams:1878}.
\end{proof}

Bearing in mind \eqref{eq:four100}, in the sequel we will make use of the following notation   $$ a_{m,n}=a_{m,n}^{(1)}+a_{m,n}^{(0)}, \quad m,n\geq 2$$
with $$a_{m,n}^{(1)}:=\int_{-1}^1 \nu(x) D \eta_{m}(x) D \eta_{n}(x) dx \quad\mathrm{and}\quad 
 a_{m,n}^{(0)}:=\int_{-1}^1 \sigma(x)\eta_{m}(x) \eta_{n}(x) dx .$$
\bnew{If} $\nu(x)=\sum_{k=0}^\infty \nu_k L_k$, then using \eqref{product} it is possible to prove 
that
\begin{eqnarray}\label{matrix:1}
a_{m+1,n+1}^{(1)}&:=&\int_{-1}^1 \nu(x) D \eta_{m+1}(x) D \eta_{n+1}(x) dx\nonumber\\ &=&\frac{\sqrt{(2m+1)(2n+1)}}{2}\int_{-1}^1\nu(x) L_m(x)L_n(x)dx\nonumber\\
&=&\sum_{r=0}^{\min(m,n)} B_{m,n}^r \nu_{m+n-2r}\ ,
\end{eqnarray}
where we set 
\begin{equation}\label{def:B}
B_{m,n}^r:=\frac{ \sqrt{(2m+1)(2n+1)}}{2m+2n-4r+1}A_{m,n}^r\ .
\end{equation}
%$ \frac{1}{\sqrt{\pi}}\frac{1}{\sqrt{m+n}}$.
%Let us study the behavior of $\vert a_{m,n}^{(1)}\vert $ by assuming a certain exponential decay of the Legendre coefficients $\nu_k$, e.g. 
%$\vert \nu_k\vert\   \leq C_\eta e^{-\eta k}$, for a certain $\eta>0$.
\begin{property}[\bnew{coefficients $B_{m,n}^r$}]\label{property:B}
There holds $$ B_{m,n}^r\lesssim 1
$$ for every $m,n\geq 0$ and $0\leq r \leq \min(m,n)$. 
\end{property}
\begin{proof}
We \bnew{first observe that Stirling's formula $ m! \sim \sqrt{2\pi m} e^{-m} m^m$
implies}
\begin{equation}
A_m\sim \frac{2^m}{\sqrt{\pi m}}\ .
\end{equation}
\bnew{This} can be used to prove asymptotic estimates
\rnew{for the factor $\frac{A_{m-r} A_r A_{n-r}}{A_{n+m-r}}$ of $A^r_{m,n}$:}
\begin{enumerate}[\rnew{$\bullet$}]
\item Case $0< r <\min(m,n)$:
\begin{equation*}\label{asympt:1}
 \frac{A_{m-r} A_r A_{n-r}}{A_{n+m-r}}\sim 
 \frac{1}{\pi} \frac{\sqrt{n+m-r}}{\sqrt{m-r}\sqrt{n-r}\sqrt{r}}\ ;
\end{equation*}
\item Case $r=0$:
\begin{equation*}\label{asympt:2}
 \frac{A_{m} A_{n}}{A_{n+m}}\sim \frac{1}{\sqrt{\pi}} \frac{\sqrt{n+m}}{\sqrt{nm}} \ ;
\end{equation*}
\item Case $r=\min(m,n)$ \rnew{and $m\not= n$}:
\begin{equation*}\label{asympt:3}
\frac{A_{\min(m,n)} A_{\vert m -n \vert}}{{\rnew{A_{\max(m,n)}}}}\sim \frac{1}{\sqrt{\pi}} \frac{\sqrt{\max(m,n)}}{\sqrt{\min(m,n)}\sqrt{\vert m -n \vert}}\ . 
\end{equation*}
\rnew{When $m=n$ it is sufficient to use $A_0=1$ to get $\frac{A_{m} A_{0}}{{\rnew{A_{m}}}}=1$.
}
\end{enumerate}
\rnew{Combining these results with \eqref{def:B},
we now estimate the quantities $B_{m,n}^r$ as follows:}
\begin{enumerate}[(a)]
\item Case $0 < r < \min(m,n)$:  
\begin{eqnarray}\label{asympt:1-1}
B_{m,n}^r&\sim&  
\frac{1}{\pi} \frac{ \sqrt{(2m+1)(2n+1)}}{2m+2n-2r+1}\frac{\sqrt{n+m-r}}{\sqrt{(m-r)(n-r)r}}
\nonumber\\
&\sim& \frac{1}{\pi} \frac{ \sqrt{mn}}{\sqrt{m+n-r}}\frac{1}{\sqrt{m-r}\sqrt{n-r}\sqrt{r}}\nonumber \ .
\end{eqnarray} 
We note that the above last term can be asymptotically bounded by 
\begin{equation*}
\rnew{B_{m,n}^r ~\sim~} \frac{1}{\pi}\frac{1}{\sqrt{\min(m+n,\vert m-n\vert)}}\ ,
\end{equation*}
which amounts to \rnew{considering} the extremal cases $r=1$ and $r=\min(m,n)-1$.
\item Case $r=0$:  
\begin{equation*}
B_{m,n}^0\sim \frac{1}{\sqrt{\pi}} \frac{1}{\sqrt{m+n}}\ .
\end{equation*}
\item Case $r=\min(m,n)$ and \rnew{$m\not=n$}: 
\begin{equation*}
B_{m,n}^{\min(m,n)}\sim \frac{1}{\sqrt{\pi}} \frac{1}{\sqrt{\vert n-m \vert}}\ .
\end{equation*}
\rnew{When $m=n$ we obtain $B_{m,m}^m\sim 1$.}
\end{enumerate}
Hence, by combining (a)-(c), there follows that the terms $B_{m,n}^r$, $0 \leq r \leq \min(m,n)$ 
are asymptotically bounded by a constant independent of $m,n,r$.
\end{proof}
\begin{proposition}[\bnew{decay of $a_{m,n}^{(1)}$}]\label{prop:stiff-1}
If there exists $\eta>0$ and a positive constant $C_\eta$ only depending on $\eta$ such that 
$$\vert \nu_k\vert\ \leq C_\eta e^{-\eta k}\qquad \forall k\geq 0 \ ,$$ 
then there holds
\begin{equation}\label{eq:stiff-1}
 \vert a_{m,n}^{(1)} \vert \leq C e^{-\eta\vert n-m\vert} \qquad \forall n,m\geq 2\ ,
\end{equation}
where $C$ is a constant only depending on $\eta$.
\end{proposition}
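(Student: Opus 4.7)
The plan is to substitute the exponential decay hypothesis on the Legendre coefficients $\nu_k$ directly into the closed-form expansion \eqref{matrix:1} for $a^{(1)}_{m+1,n+1}$, and then control the resulting finite sum by means of the uniform bound on $B^r_{m,n}$ already provided by Property \ref{property:B}.

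Concretely, for $m,n \geq 2$ I would rewrite \eqref{matrix:1} in terms of the shifted indices $m-1,n-1 \geq 1$, obtaining
$$
a^{(1)}_{m,n} \;=\; \sum_{r=0}^{\min(m,n)-1} B^r_{m-1,n-1}\, \nu_{m+n-2-2r}.
$$
By Property \ref{property:B} every coefficient $|B^r_{m-1,n-1}|$ is bounded by an absolute constant, and by hypothesis $|\nu_k|\leq C_\eta e^{-\eta k}$. Combining these two facts reduces the estimate to controlling the purely numerical sum
$$
\Sigma_{m,n} \;:=\; \sum_{r=0}^{\min(m,n)-1} e^{-\eta(m+n-2-2r)}.
$$

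Next I would evaluate $\Sigma_{m,n}$. By the symmetry of $a^{(1)}_{m,n}$ in $m$ and $n$ I may assume without loss of generality $m\leq n$, so that the change of index $s=(m-1)-r$ factors the sum as $\Sigma_{m,n} = e^{-\eta(n-m)}\sum_{s=0}^{m-1} e^{-2\eta s}$, and the geometric series is bounded by $(1-e^{-2\eta})^{-1}$. This yields the desired inequality $|a^{(1)}_{m,n}|\leq C\, e^{-\eta|n-m|}$, where $C$ depends only on $\eta$ through $C_\eta$, the constant implicit in Property \ref{property:B}, and the factor $(1-e^{-2\eta})^{-1}$.

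I do not anticipate any serious obstacle: once the uniform bound of Property \ref{property:B} is in hand, the core mechanism is simply that the largest contribution to $\Sigma_{m,n}$, occurring at the extremal index $r=\min(m,n)-1$, is precisely $e^{-\eta|n-m|}$, while the lower-order terms form a geometrically decaying tail. In other words, the exponential structure of the hypothesis on $\nu_k$ is exactly what is needed to preserve off-diagonal exponential decay of $a^{(1)}_{m,n}$ after interaction with the Legendre product structure constants $A^r_{m,n}$, once these are known to be bounded.
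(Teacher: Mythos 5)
Your proposal is correct and follows essentially the same route as the paper: bound the structure constants $B^r_{m,n}$ uniformly via Property \ref{property:B}, insert the exponential decay of the $\nu_k$, factor out $e^{-\eta|n-m|}$, and control the remaining geometric tail by $(1-e^{-2\eta})^{-1}$. The only difference is that you track the index shift in \eqref{matrix:1} more explicitly than the paper does, which is a minor point of care rather than a change of method.
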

\begin{proof}
Using \eqref{matrix:1} and Property \ref{property:B} there follows 
\begin{eqnarray}
 \vert a_{m,n}^{(1)} \vert &\lesssim& \sum_{r=0}^{\min(m,n)} \vert \nu_{ m+n-2r} \vert 
 ~\lesssim \sum_{r=0}^{\min(m,n)} e^{-\eta(m+n-2r)}\nonumber\\
 &\lesssim& e^{-\eta\vert m-n\vert} \sum_{r=0}^{\min(m,n)}e^{-2\eta (\min(m,n)-r)}
~\lesssim C_\eta e^{-\eta \vert m-n\vert}\ .\nonumber
\end{eqnarray}
\rnew{This gives \eqref{eq:stiff-1} as asserted.}
\end{proof}
%---------------- reaction term -------------

Let $\sigma(x)=\sum_{k=0}^\infty \sigma_k L_k$ then using \eqref{eq:defBS} and\eqref{product} it is possible to prove that
\begin{eqnarray}\label{matrix:2}
a_{m,n}^{(0)}&:=&\int_{-1}^1 \sigma(x) \eta_{m}(x) \eta_{n}(x) dx\nonumber\\
&=& \frac{1}{\sqrt{(2m-1)(2n-1)}}\int_{-1}^1\sigma(x) (L_{m-2}(x)-L_m(x))(L_{n-2}(x)-L_n(x))dx\nonumber\\
&=&\frac{1}{\sqrt{(2m-1)(2n-1)}} \left\{
\sum_{r=0}^{\min(m-2,n-2)} C_{m-2,n-2}^r \sigma_{m-2+n-2-2r}\right .\nonumber\\
&&+ \sum_{r=0}^{\min(m-2,n)} C_{m-2,n}^r \sigma_{m-2+n-2r}+ \sum_{r=0}^{\min(m,n-2)} C_{m,n-2}^r \sigma_{m+n-2-2r}\nonumber\\
&& \left .+ \sum_{r=0}^{\min(m,n)} C_{m,n}^r \sigma_{m+n-2r}
\right\}\ ,
\end{eqnarray}
where 
$$ C_{m,n}^r:=  \frac{A_{m,n}^r }{2m+2n-4r+1}\ .$$
\begin{property}[\bnew{coefficients $C_{m,n}^r$}]\label{property:C}
There holds 
$$ \frac{1}{\sqrt{(2m-1)(2n-1)}}{C_{m-k,n-j}^r} \lesssim 1
$$ with $k,j=0,2$ and $m,n\geq 0$ and $0\leq r \leq \min(m-k,n-j)$.
\end{property}
\begin{proof}
Let us first consider the case $k,j=0$. Proceeding as in the proof of
Property \ref{property:B} we get the following cases \rnew{for the
auxiliary quantity
$D_{m,n}^r = \frac{1}{\sqrt{(2m-1)(2n-1)}}C_{m,n}^r$:}
\begin{enumerate}[(a)]
\item Case \rnew{$0 < r < \min(m,n)$:}
\begin{eqnarray}
\rnew{D_{m,n}^r}
\sim\frac{1}{\pi} \frac{1}{\sqrt{(2m-1)(2n-1)}}\frac{1}{2m+2n-2r+1}\frac{\sqrt{n+m-r}}{\sqrt{(m-r)(n-r)r}}\ .\nonumber
\end{eqnarray}
\item Case  $r=0$:
\begin{eqnarray}
\rnew{D_{m,n}^r} \sim\frac{1}{\sqrt{\pi}} \frac{1}{\sqrt{(2m-1)(2n-1)}}\frac{1}{2m+2n+1}\frac{\sqrt{n+m}}{\sqrt{mn}}\ .\nonumber
\end{eqnarray}
\item Case $r=\min(m,n)$:
\begin{eqnarray}
\rnew{D_{m,n}^r} \sim
\frac{1}{\sqrt{\pi}} \frac{1}{\sqrt{(2m-1)(2n-1)}}\frac{1}{2\max(m,n)+1}\frac{\sqrt{\max(n,m)}}{\rnew{\sqrt{\min(m,n)}\sqrt{\vert m-n\vert}}}\ .\nonumber
\end{eqnarray}
\end{enumerate}
We note that each right-hand side term in (a)-(c) is asymptotically bounded by a constant.
\rnew{Therefore the quantities $D_{m,n}^r$}, $0 \leq r \leq \min(m,n)$ are asymptotically bounded by a constant independent of $n,m,r$. The other terms in \eqref{matrix:2} can be treated similarly.
\end{proof} 
\begin{proposition}[\bnew{decay of $a_{m,n}^{(0)}$}]\label{prop:stiff-2}
If there exists $\eta>0$ and a positive constant $C_\eta$ only depending on $\eta$ such that 
$$\vert \sigma_k\vert\ \leq C_\eta e^{-\eta k}\qquad \forall k\geq 0\ ,$$ 
then there holds
\begin{equation}
 \vert a_{m,n}^{(0)} \vert \leq C e^{-\eta\vert n-m\vert} \qquad \forall n,m\geq 0\ ,
\end{equation}
where $C$ is a constant only depending on $\eta$.
\end{proposition}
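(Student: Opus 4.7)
The plan is to mimic the proof of Proposition~\ref{prop:stiff-1}, but starting from the four-term expansion \eqref{matrix:2} of $a_{m,n}^{(0)}$ and invoking Property~\ref{property:C} in place of Property~\ref{property:B}. The prefactor $1/\sqrt{(2m-1)(2n-1)}$ is crucial: it is exactly what Property~\ref{property:C} absorbs into a uniform bound on the coefficients $C_{m-k,n-j}^r$ for $k,j\in\{0,2\}$. Once this is done, only the decay hypothesis on the $\sigma_k$'s and a geometric sum argument remain.

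First I would write
\[
 |a_{m,n}^{(0)}| \;\le\; \sum_{(k,j)} \frac{1}{\sqrt{(2m-1)(2n-1)}} \sum_{r=0}^{\min(m-k,n-j)} |C_{m-k,n-j}^{r}|\,|\sigma_{m+n-k-j-2r}| ,
\]
where the outer sum runs over the four index shifts $(k,j)\in\{(2,2),(2,0),(0,2),(0,0)\}$ appearing in \eqref{matrix:2}. By Property~\ref{property:C}, each prefactor times $C_{m-k,n-j}^{r}$ is $\lesssim 1$ uniformly in $m,n,r$, so I can replace all those coefficients by an absolute constant.

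Next, I would insert the assumption $|\sigma_p|\le C_\eta e^{-\eta p}$ and, following the argument in Proposition~\ref{prop:stiff-1}, factor out the smallest exponent. For the generic term
\[
\sum_{r=0}^{\min(m-k,n-j)} e^{-\eta (m+n-k-j-2r)}
= e^{-\eta|(m-k)-(n-j)|}\sum_{r=0}^{\min(m-k,n-j)} e^{-2\eta(\min(m-k,n-j)-r)},
\]
the inner geometric sum is bounded by $(1-e^{-2\eta})^{-1}$, and $|(m-k)-(n-j)|\ge |m-n|-2$ for each of the four $(k,j)$ choices, so each of the four pieces is controlled by a constant times $e^{-\eta|m-n|}$. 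Summing the four contributions delivers the desired estimate.

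The only mildly delicate step is the last one: verifying that in every one of the four cases the shift by at most $2$ in the index difference only costs a harmless factor $e^{2\eta}$, and that the lower range of summation is nonnegative whenever $m,n\ge 2$ (so that $m-k,n-j\ge 0$), which is consistent with the range $n,m\ge 0$ claimed in the statement once one notes that the terms involving $m-2$ or $n-2$ are vacuous when $m=0,1$ or $n=0,1$. No other obstacle arises; the proof is essentially a bookkeeping extension of Proposition~\ref{prop:stiff-1}.
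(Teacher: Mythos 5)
Your proposal is correct and follows exactly the route the paper intends: the paper's proof is the one-line instruction ``use \eqref{matrix:2} together with Property~\ref{property:C} and argue as in the proof of Proposition~\ref{prop:stiff-1},'' and you have simply written out the bookkeeping (the four $(k,j)$ shifts, the geometric sum, and the harmless $e^{2\eta}$ loss from $|(m-k)-(n-j)|\ge|m-n|-2$) that the paper leaves implicit. No discrepancy to report.
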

\begin{proof}
Use \eqref{matrix:2} together with Property \ref{property:C}
and argue as in the proof of Proposition \ref{prop:stiff-1}.
\end{proof}

Combining Propositions \ref{prop:stiff-1} and \ref{prop:stiff-2} yields
\begin{corollary}[\bnew{decay of $a_{m,n}$}]
If there exists $\eta>0$ and a positive constant $C_\eta$ only depending on $\eta$ such that 
$$\vert \nu_k \vert , \  \vert \sigma_k \vert  \leq C_\eta e^{-\eta k}\qquad \forall k\geq 0\ ,$$ 
then there holds
\begin{equation}
 \vert a_{m,n} \vert \leq C e^{-\eta\vert n-m\vert} \qquad \forall n,m\geq 2\ ,
\end{equation}
where $C$ is a constant only depending on $\eta$.
\end{corollary}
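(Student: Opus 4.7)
The plan is essentially to combine the two preceding propositions via the triangle inequality, since the splitting $a_{m,n}=a_{m,n}^{(1)}+a_{m,n}^{(0)}$ has already been introduced and each piece has been controlled separately.

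Concretely, I would first invoke Proposition \ref{prop:stiff-1} on the diffusion part: the hypothesis $|\nu_k|\le C_\eta e^{-\eta k}$ yields a constant $C^{(1)}$, depending only on $\eta$, such that $|a_{m,n}^{(1)}|\le C^{(1)} e^{-\eta|n-m|}$ for all $m,n\ge 2$. Then I would invoke Proposition \ref{prop:stiff-2} on the reaction part: the hypothesis $|\sigma_k|\le C_\eta e^{-\eta k}$ yields a constant $C^{(0)}$, again depending only on $\eta$, such that $|a_{m,n}^{(0)}|\le C^{(0)} e^{-\eta|n-m|}$ for all $m,n\ge 2$ (note that the statement of Proposition \ref{prop:stiff-2} is given for $m,n\ge 0$, so restricting to $m,n\ge 2$ is immediate).

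Finally, by the triangle inequality,
\[
|a_{m,n}| \le |a_{m,n}^{(1)}| + |a_{m,n}^{(0)}| \le \bigl(C^{(1)}+C^{(0)}\bigr) e^{-\eta|n-m|},
\]
and setting $C:=C^{(1)}+C^{(0)}$ gives the claim. Since both $C^{(1)}$ and $C^{(0)}$ depend only on $\eta$ (through the input constant $C_\eta$ from the decay hypotheses on $\nu_k$ and $\sigma_k$), so does $C$.

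There is essentially no obstacle here; the corollary is a bookkeeping step whose sole content is that the decay rate $\eta$ transfers unchanged from the coefficient expansions $\nu_k,\sigma_k$ to the full stiffness matrix entries $a_{m,n}$. All the real work has already been done in Propositions \ref{prop:stiff-1} and \ref{prop:stiff-2}, which rely on the expansion formulas \eqref{matrix:1}--\eqref{matrix:2} and on the uniform bounds for $B_{m,n}^r$ and $D_{m,n}^r$ established in Properties \ref{property:B} and \ref{property:C}.
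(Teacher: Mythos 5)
Your proof is correct and is exactly the paper's argument: the corollary is stated immediately after the phrase ``Combining Propositions \ref{prop:stiff-1} and \ref{prop:stiff-2} yields,'' i.e., the paper also just adds the two bounds on $a_{m,n}^{(1)}$ and $a_{m,n}^{(0)}$ via the triangle inequality. Nothing further is needed.
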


Correspondingly, the matrix ${\bf A}$ belongs to the following
class.

\begin{definition}[{regularity classes for $\bA$}]\label{def:class.matrix}
A matrix ${\bf A}$ is said to belong to the {exponential} class 
${\mathcal D}_e(\eta_L)$ if there exists a constant $c_L>0$ 
such that its elements satisfy
\begin{equation}\label{eq:four170}
| a_{m,n} | \leq  c_L e^{-\eta_L\vert m - m \vert }\;  \qquad m, n  \geq 2  \;.
\end{equation}
\end{definition}
The following properties hold.
\begin{property}[{continuity of $\bA$}]\label{prop:bounded}
If ${\bf A}\in{\mathcal D}_e(\eta_L)$, then ${\bf A}$ defines a bounded operator
on {$\ell^2(\mathbb{N}_2)$}.
\end{property}
\begin{proof}
It is sufficient to extend the semi-infinite matrix $\mathbf{A}=(a_{\ell,k})_{\ell,k\in\mathbb{N}_2}$ to a bi-infinite matrix $\mathbf{\tilde{A}}=(\tilde{a}_{\ell,k})_{\ell,k\in\mathbb{Z}}$ such that it corresponds to the identity matrix for $\ell,k \in \mathbb{Z}\setminus \mathbb{N}_2$ and it is equal to $\mathbf{A}$ otherwise. Then proceed as in \cite{Jaffard:1990, Dahlke-Fornasier-Groechenig:2010}. 
\end{proof}

\begin{property}[{inverse of $\bA$}] \label{prop:inverse.matrix-estimate}
If $\mathbf{A} \in {\mathcal D}_e(\eta_L)$ and there exists a constant $c_L$
satisfying \eqref{eq:four170} such that 
\begin{equation}\label{restriction-cL}
c_L < \frac12({\rm e}^{\eta_L} -1) \min_\ell a_{\ell,\ell}\;,
\end{equation}
then ${\bf A}$ is invertible in  $\ell^2(\mathbb{N}_2)$ and 
${\bf A}^{-1}\in{\mathcal D}_e(\bar{\eta}_L)$ where 
$\bar{\eta}_L \in (0,\eta_L]$ 
is such that $\bar{z}={\rm e}^{-\bar{\eta}_L}$ is the unique zero in the 
interval $(0,1)$ of the polynomial
$$
z^2- \frac{{\rm e}^{2\eta_L}+2c_L+1}{{\rm e}^{\eta_L}(c_L+1)}z+1 \;.
$$
\end{property}
\begin{proof}
See \bnew{\cite[Property 2.3]{CNV:2011}}.
\end{proof}

\noindent For any integer $J \geq 0$, let $\mathbf{A}_J$ denote {the following
symmetric truncation of the matrix $\mathbf{A}$}
\begin{equation}\label{eq:trunc-matr}
(\mathbf{A}_J)_{\ell,k}=
\begin{cases}
a_{\ell,k} & \text{if } |\ell-k| \leq J \;, \\
0 & \text{elsewhere.}
\end{cases}
\end{equation}
Then, we have the following well-known results.
\begin{property}[truncation]\label{prop:matrix-estimate}
{The truncated matrix $\bA_J$ has a number of non-vanishing entries
bounded by $2 J$. Moreover,
under the assumption of Property \ref{prop:bounded},} there exists a constant 
$C_{\mathbf{A}} $ such that 
\[
\Vert \mathbf{A}-{\mathbf{A}}_J \Vert \leq
\psi_{\mathbf{A}}(J,\eta):=C_{\mathbf{A}} {\rm e}^{-\eta_L J}
\]
{for all $J\ge0$.}
Consequently, under the assumptions of Property \ref{prop:inverse.matrix-estimate},
one has
\begin{equation}\label{eq:trunc-invmatr-err}
\Vert \mathbf{A}^{-1}-(\mathbf{A}^{-1})_J \Vert \leq 
\psi_{\mathbf{A}^{-1}} (J,\bar{\eta}_L)
\end{equation}
where we {let $\bar\eta_L$ be defined in Property \ref{prop:inverse.matrix-estimate}.}
\end{property}

\begin{proof}
See \bnew{\cite[Property 2.4]{CNV:2011}}.
\end{proof}

%%%%%%%%%%%%%%%%%%%%%%%%%%%%%%%%%%%%%%%%%%%
\section{Towards an adaptive algorithm}\label{sec:plain-adapt-alg}
In order to design an adaptive algorithm with optimal convergence and complexity properties, we start 
by considering an {\sl ideal one}. 
\bnew{This} will serve as a reference to discuss the most relevant aspects which have to be taken into account in designing the final algorithm.
The ideal algorithm uses as
error estimator the ideal one, i.e., the norm of the residual in $H^{-1}(I)$.
\bnew{We} thus set, for any $v \in H^1_0(I)$,
\begin{equation}\label{eq:four.2.2}
\eta^2(v)=\Vert r(v) \Vert^2 = \sum_{k \in \mathbb{N}_2} |\hat{r}_k(v)|^2\;,
\end{equation}
so that (\ref{eq:four.2.1}) can be rephrased as
\begin{equation}\label{eq:four.2.3}
\frac1{\alpha^*} \eta(u_\Lambda) \leq
\Vert u - u_\Lambda \Vert \leq 
\frac1{\alpha_*} \eta(u_\Lambda) \; .
\end{equation}

Obviously, this estimator is hardly computable in practice. However,
by introducing 
\bnew{suitable polynomial approximations} of the coefficients and the
right-hand side, it is possible to consider a \bnew{\it feasible} version. In the sequel we will not pursue this possibility, but we refer to \cite{CNV:2011} for the details.
Given any subset $\Lambda \subseteq \mathbb{N}_2$, we also define the quantity
$$
\eta^2(v;\Lambda) = \Vert P_\Lambda r(v) \Vert^2 
= \sum_{k \in \Lambda} |\hat{r}_k(v)|^2\;,
$$
so that $\eta(v)=\eta(v;\mathbb{N}_2)$.

We now introduce the following procedures, which will enter the definition of all our adaptive algorithms.

\begin{itemize}
\item $u_\Lambda := {\bf GAL}(\Lambda)$ \\
Given a finite subset $\Lambda \subset\mathbb{N}_2$, the output
$u_\Lambda \in V_\Lambda$ is the solution of the Galerkin problem (\ref{eq:four.2}) relative to $\Lambda$.

\item $r := {\bf RES}(v_\Lambda)$ \\
Given a function $v_\Lambda \in V_\Lambda$ for some finite index set $\Lambda$, 
the output $r$ is the residual $r(v_\Lambda)=f-Lv_\Lambda$.

\item $\Lambda^* := \text{\bf D\"ORFLER}(r, \theta)$\\
Given $\theta \in (0,1)$ and an element $r \in H^{-1}(I)$, 
the ouput $\Lambda^* \subset \mathbb{N}_2$ is a finite set of minimal cardinality 
such that the inequality
\begin{equation}\label{eq:four.2.5.5}
\Vert P_{\Lambda^*} r \Vert \geq \theta  \Vert r \Vert 
\end{equation}
is satisfied.
\end{itemize}
Note that the latter inequality is equivalent to
\begin{equation}\label{eq:four.2.5.5bis}
\Vert r-P_{\Lambda^*} r \Vert \leq \sqrt{1-\theta^2}  \Vert r \Vert \;.
\end{equation}
If $r=r(u_\Lambda)$ is the residual of a Galerkin solution $u_\Lambda \in V_\Lambda$, 
then by (\ref{eq:four.2.1ter}) we can trivially assume 
that $\Lambda^*$ is contained in $\Lambda^c := \mathbb{N}_2 \setminus \Lambda$. 
For such a residual, inequality (\ref{eq:four.2.5.5}) can then be stated as
\begin{equation}\label{eq:four.2.5.5ter}
\eta(u_\Lambda;\Lambda^*) \geq \theta \eta(u_\Lambda) \;
\end{equation}
\bnew{{\bf D\"orfler marking} (or bulk chasing)}. Writing $\hat{r}_k = \hat{r}_k(u_{\Lambda})$,
the condition {\eqref{eq:four.2.5.5ter}} can be equivalently stated as
\begin{equation}\label{eq:four.2.4.bis}
\sum_{k \in \Lambda^*}  |\hat{r}_k|^2 
\geq \theta^2  \sum_{k \not \in \Lambda}  |\hat{r}_k|^2 \;. 
\end{equation}
Thus, the output set $\Lambda^*$ of minimal cardinality can be immediately determined 
if the coefficients $\hat{r}_k$ are rearranged in non-increasing order of modulus. 
The cardinality of $\Lambda^*$ depends on the rate of decay of the
rearranged coefficients, i.e., on  \bnew{{\sl the sparsity} of 
the} representation of the residual in the chosen basis.

\medskip
Given two parameters $\theta \in (0,1)$ and $tol \in [0,1)$, 
we are ready to define  our ideal adaptive algorithm. 

\bigskip
{\bf Algorithm ADLEG}($\theta, \, tol$)
\begin{itemize}
\item[\ ] Set $r_0:=f$, $\Lambda_0:=\emptyset$, $n=-1$
\item[\ ] do
	\begin{itemize}
	\item[\ ] $n \leftarrow n+1$
	\item[\ ] $\partial\Lambda_{n}:= \text{\bf D\"ORFLER}(r_n, \theta)$
	\item[\ ] $\Lambda_{n+1}:=\Lambda_{n} \cup \partial\Lambda_{n}$
	\item[\ ] $u_{n+1}:= {\bf GAL}(\Lambda_{n+1})$
	\item[\ ] $r_{n+1}:= {\bf RES}(u_{n+1})$
	\end{itemize}
\item[\ ]  while $\Vert r_{n+1} \Vert > tol $
\end{itemize}

This algorithm is the non-periodic counterpart of the ideal algorithm {\bf ADFOUR} considered in \cite{CNV:2011}. The same proof  given therein yields the following result,
which states the convergence of the algorithm with a guaranteed error reduction rate.

\begin{theorem}[{contraction property of {\bf ADLEG}}]\label{teo:four1}
Let us set 
\begin{equation}\label{eq:def_rhotheta}
\rho=\rho(\theta)= \sqrt{1 - \frac{\alpha_*}{\alpha^*}\theta^2} \in (0,1) \;.
\end{equation}
Let $\{\Lambda_n, \, u_n \}_{n\geq 0}$ be the sequence generated by
the adaptive algorithm {\bf ADLEG}.
Then, the following bound holds for any $n$:
$$
  \tvert u-u_{n+1} \tvert \leq \rho \tvert u-u_n \tvert \;.
$$ 
Thus, for any $tol>0$ the algorithm terminates in a finite number of iterations, whereas for $tol=0$
the sequence $u_n$ converges to $u$ in $H^1(I)$ as $n \to \infty$. \endproof
\end{theorem}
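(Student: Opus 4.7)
The proof is the standard Dörfler-contraction argument, now in the energy norm induced by the Legendre-Galerkin bilinear form; the only reason it transfers verbatim from the Fourier setting of \cite{CNV:2011} is that on the Babu\v ska-Shen side we also have the Parseval identity \eqref{eq:propBS.3}, the dual Parseval identity \eqref{eq:propBS.4}, and the residual-error equivalence \eqref{eq:four.2.1bis}. The plan has three ingredients.

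First, I would exploit the nestedness $V_{\Lambda_n}\subset V_{\Lambda_{n+1}}$ guaranteed by the update $\Lambda_{n+1}=\Lambda_n\cup\partial\Lambda_n$. Galerkin orthogonality with respect to the bilinear form $a(\cdot,\cdot)$ then gives the Pythagoras identity
\[
\tvert u - u_n \tvert^2 \;=\; \tvert u - u_{n+1} \tvert^2 \;+\; \tvert u_{n+1} - u_n \tvert^2,
\]
so proving contraction reduces to a lower bound on the ``increment'' $\tvert u_{n+1}-u_n\tvert$.

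Second, I would derive that lower bound from D\"orfler marking. By the Galerkin defining identities for $u_n$ and $u_{n+1}$, the increment solves $a(u_{n+1}-u_n,v)=\langle r_n,v\rangle$ for all $v\in V_{\Lambda_{n+1}}$. Using the characterization of the norm of the restricted residual via duality,
\[
\Vert P_{\Lambda_{n+1}} r_n \Vert \;=\; \sup_{v\in V_{\Lambda_{n+1}}\setminus\{0\}}\frac{\langle r_n,v\rangle}{\Vert v\Vert}
\;\le\; \sqrt{\alpha^*}\sup_{v\in V_{\Lambda_{n+1}}\setminus\{0\}}\frac{a(u_{n+1}-u_n,v)}{\tvert v\tvert}
\;=\; \sqrt{\alpha^*}\,\tvert u_{n+1}-u_n\tvert,
\]
where the inequality uses $\tvert v\tvert\le\sqrt{\alpha^*}\Vert v\Vert$ from \eqref{eq:four.1bis}. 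Since $\partial\Lambda_n\subseteq\Lambda_{n+1}$ and $\partial\Lambda_n$ was produced by \textbf{D\"ORFLER}$(r_n,\theta)$, one has $\Vert P_{\Lambda_{n+1}} r_n\Vert\ge\Vert P_{\partial\Lambda_n} r_n\Vert\ge\theta\Vert r_n\Vert$, whence
\[
\tvert u_{n+1}-u_n\tvert^2 \;\ge\; \frac{\theta^2}{\alpha^*}\,\Vert r_n\Vert^2.
\]

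Third, I would use the residual-error equivalence \eqref{eq:four.2.1bis} to replace the residual by the energy error: its right inequality gives $\Vert r_n\Vert\ge\sqrt{\alpha_*}\,\tvert u-u_n\tvert$. Combining with the previous two displays yields
\[
\tvert u-u_{n+1}\tvert^2 \;\le\; \Bigl(1-\frac{\alpha_*}{\alpha^*}\theta^2\Bigr)\,\tvert u-u_n\tvert^2 \;=\; \rho^2\,\tvert u-u_n\tvert^2,
\]
which is the claim. Geometric contraction then forces termination in finitely many steps if $tol>0$, and if $tol=0$ the sequence $u_n$ converges to $u$ in the energy norm, hence in $H^1_0(I)$ by \eqref{eq:four.1bis}.

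There is no genuine obstacle here: the only ``transfer'' item is verifying that $\Vert P_\Lambda r\Vert=\sup_{v\in V_\Lambda}\langle r,v\rangle/\Vert v\Vert$ in the Babu\v ska-Shen framework, which is immediate from the Parseval identities \eqref{eq:propBS.3}--\eqref{eq:propBS.4} and the duality $\langle\eta_k^*,\eta_m\rangle=\delta_{km}$. Everything else is a direct replay of the Fourier argument of \cite{CNV:2011}.
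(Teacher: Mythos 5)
Your proof is correct and is essentially the paper's own argument: the paper gives no separate proof here but defers to the contraction theorem for \textbf{ADFOUR} in \cite{CNV:2011}, whose proof is exactly your three steps — the Galerkin--Pythagoras identity from nestedness, the lower bound $\tvert u_{n+1}-u_n\tvert \ge \theta (\alpha^*)^{-1/2}\Vert r_n\Vert$ from D\"orfler marking, and the residual--error equivalence \eqref{eq:four.2.1bis}. Your remark that the only non-periodic ingredient is the Parseval/duality structure of the Babu\v ska--Shen basis, i.e.\ \eqref{eq:propBS.3}--\eqref{eq:propBS.4}, is precisely why the authors can claim the proof transfers verbatim.
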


\noindent At this point, some comments are in order.
\begin{itemize}
\item The predicted error reduction rate $\rho=\rho(\theta)$, being bounded 
from below by the quantity $\sqrt{1 - \frac{\alpha_*}{\alpha^*}}$, looks overly pessimistic, particularly in the case of smooth (analytic) solutions. Indeed, in this case a spectral (Legendre) Galerkin method is expected to exhibit very fast (exponential)
error decay. For this reason, we will introduce in Section \ref{sec:moddorfler} a variant of the D\"orfler procedure 
which -- through a suitable enrichment of the set of new degrees of freedom detected by 
the usual D\"orfler -- will guarantee an arbitrarily large error reduction per iteration.
\item The complexity analysis of the algorithm requires to relate the current error $\varepsilon_n:=\tvert u - u_n\tvert$ to the cardinality of the set $\Lambda_n$ of the activated degrees of freedom, having as a benchmark the best approximation (i.e. the one achieved with the minimal number of degrees of freedom) of the exact solution $u$ up to an error given exactly by $\varepsilon_n$. This requires to investigate the {\sl sparsity class} of the solution $u$, a task that will be accomplished in Section \ref{sec:nl}; 
we will confine ourselves to the case of infinite differentiable functions (including analytic ones) for which a natural framework is provided by Gevrey spaces. (The analysis of the case of finite smoothness can be carried out by extending the arguments presented in \cite{CNV:2011}.) 
\item The cardinality of the set $\Lambda_n$ of active degrees of freedom selected by the D\"orfler procedure may be estimated in terms of the sparsity class of the residual, rather than the one of the solution. If the residual is less sparse than the solution, we run into a potential situation of non-optimality.
This is precisely what happens for the Gevrey-type sparsity classes, as pointed out in Section \ref{sec:spars-res}. For this reason, we will incorporate in our algorithm a coarsening step, introduced in Section \ref{sec:coarsening}, to bring the cardinality of the active degrees of freedom at the end of each iteration to be comparable with the optimal one dictated by the sparsity class of the solution.
\end{itemize}

%%%%%%%%%%%%%%%%%%%%%%%%%%%%%%%%%%%%%%%%%%%
\section{Nonlinear approximation in Gevrey spaces}\label{sec:nl}
At first, we consider Gevrey spaces of linear type and then, through the concept of nonlinear approximation,  we will introduce 
sparsity classes of functions related to Gevrey spaces.
\subsection{Gevrey classes and their properties}
We recall the following definition of classical Gevrey space. 
\begin{definition}[definition of ${G}^t(\bar{I})$]\label{def:Gt}
For any $t\in (0,1]$, we denote by $G^t(\bar{I})$ the space of $C^\infty$ functions $v$ 
in a neighborhood of $\bar{I}$ for which there exist a constant $L\geq 0$
such that for any $n\geq 0$
\begin{equation}
\| D^n v\|_{L^2(I)} \leq L^{n+1} (n !)^{1/t}\ .
\end{equation}
\end{definition}
The choice $t=1$ yields the usual class $\mathscr{A}(\bar{I})=G^1(\bar{I})$ 
of analytic functions in a neighborhood of $\bar{I}$.

Another family of spaces of Gevrey type has been introduced in \cite{BG:72} by relaxing the requirement on the growth of the derivatives near the boundary. In our simple one-dimensional setting, its definition is a follows.
\begin{definition}[definition of $\mathscr{A}^t(\bar{I})$]\label{def:At}
Let $\mathcal{L}$ denote the Legendre operator in $I$
$$
\mathcal{L}v =-D((1-x^2)D)v \;;
$$ 
and, for any $k\geq 0$, let us set 
\begin{equation*}
R_k v= 
\begin{cases}
\mathcal{L}^p v &\textit{if~}k=2p,\\
(1-x^2)D\mathcal{L}^p v & \textit{if~}k=2p+1\ .
\end{cases}
\end{equation*}
Then for any $t\in (0,1]$, we denote by $\mathcal{A}^t(\bar{I})$ the space of $C^\infty$ functions $v$ 
in a neighborhood of $\bar{I}$ for which there exist a constant $L\geq 0$
such that for any $k\geq 0$
\begin{equation}
\| R_k v\|_{L^2(I)} \leq L^{k+1} (k !)^{1/t}\ .
\end{equation}
\end{definition}

\noindent The relation between the two families is given by the inclusion 
\begin{equation}\label{inclusion}
{G}^t(\bar{I})\subseteq\mathcal{A}^t(\bar{I})\subseteq {G}^{\frac{t}{2-t}}(\bar{I})\ ,
\end{equation}
where the first inclusion is an immediate consequence of the fact that $1-x^2$ is bounded with its derivatives in $\bar{I}$, whereas the second one is proved in \cite{BG:72}.

Let $\mathbb{P}_n=\{\phi_0,\cdots,\phi_n\}$ be the space of polynomials of degree 
$\leq n$ in $I$ and let $d_2(v,\mathbb{P}_n)=\inf_{p\in\mathbb{P}_n}\| v-p\|_{L^2(I)}$
be the distance of a function $v\in L^2(I)$ \bnew{to} $\mathbb{P}_n$.
\begin{proposition}\label{prop:gevrey}
Let $v\in L^2(I)$. The following conditions are equivalent:
\begin{enumerate}
\item[(i)] $v\in \mathscr{A}^t(I)$.
\item[(ii)] There exist $L_1\geq 0$ and $\eta_1 >0$ such that
\begin{equation}\label{eq:gevrey-error}
d_2(v,\mathbb{P}_n)\leq L_1 e^{-\eta_1 n^t}\quad \forall n\geq 0.
\end{equation} 
\item[(iii)] There exist $L_2\geq 0$ and $\eta_2 >0$ such that the Legendre coefficients
of $v=\sum_{k=0}^\infty \hat{v}_k \phi_k$ satisfy
\begin{equation}\label{eq:gevrey-coeff}
\vert \hat{v}_k \vert \leq L_2 e^{-\eta_2 k^t}\quad \forall k\geq 0.
\end{equation}
\end{enumerate}
\end{proposition}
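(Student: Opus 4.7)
The plan is to show $\text{(ii)} \Leftrightarrow \text{(iii)}$ by a direct Parseval computation, and then $\text{(i)} \Leftrightarrow \text{(iii)}$ by exploiting the fact that the orthonormal Legendre basis diagonalizes the Legendre operator $\mathcal{L}$.

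For $\text{(iii)} \Rightarrow \text{(ii)}$ I would use Parseval's identity $d_2(v,\mathbb{P}_n)^2 = \sum_{k>n} |\hat v_k|^2$, bound the tail by $L_2^2\sum_{k>n} e^{-2\eta_2 k^t}$, and control this sum via the integral comparison test; after the substitution $y=x^t$, the integral produces a polynomial prefactor times $e^{-2\eta_2(n+1)^t}$, and the prefactor is absorbed into any slightly smaller exponent $\eta_1<\eta_2$. Conversely, $\text{(ii)} \Rightarrow \text{(iii)}$ is immediate: $|\hat v_n|^2 \leq d_2(v,\mathbb{P}_{n-1})^2 \leq L_1^2 e^{-2\eta_1(n-1)^t}$, and the elementary bound $(n-1)^t \geq n^t-1$, valid for $t\in(0,1]$ and $n\geq 1$, transfers the exponent to $n^t$ at the cost of a harmless multiplicative constant.

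For $\text{(i)} \Leftrightarrow \text{(iii)}$, the key ingredients are the eigenrelation $\mathcal{L}\phi_m = m(m+1)\phi_m$ and, from integration by parts (boundary terms vanishing thanks to the weight $1-x^2$), the identity
$$
\|(1-x^2)^{1/2} Dw\|_{L^2(I)}^2 = (\mathcal{L}w,w) = \sum_m m(m+1)|\hat w_m|^2,
$$
together with the trivial pointwise bound $\|(1-x^2)Dw\|_{L^2(I)} \leq \|(1-x^2)^{1/2}Dw\|_{L^2(I)}$. Applying these with $w=\mathcal{L}^p v$ yields a single uniform estimate, valid for both parities of $k$,
$$
\|R_k v\|_{L^2(I)}^2 \leq \sum_m (m(m+1))^k |\hat v_m|^2.
$$
To deduce $\text{(iii)} \Rightarrow \text{(i)}$, I plug the decay $|\hat v_m|\leq L_2 e^{-\eta_2 m^t}$ into this sum, reserve one factor $e^{-\eta_2 m^t}$ to ensure convergence, and bound the rest via the calculus extremum of $m\mapsto m^{2k} e^{-\eta_2 m^t}$, attained near $m_\ast\sim (k/(\eta_2 t))^{1/t}$; Stirling's formula identifies this maximum with $L^{2(k+1)}((k!)^{1/t})^2$ for a suitable constant $L$, matching Definition \ref{def:At}. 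For $\text{(i)} \Rightarrow \text{(iii)}$, only the even case of $R_k$ is needed: $\|\mathcal{L}^p v\|^2 = \sum_m (m(m+1))^{2p} |\hat v_m|^2$ yields $|\hat v_m| \leq (m(m+1))^{-p} L^{2p+1}((2p)!)^{1/t}$, and optimizing the right-hand side over $p$ (the minimizer is near $p_\ast \sim m^t$) returns, again via Stirling, the bound $|\hat v_m|\lesssim e^{-\eta_2 m^t}$.

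The main obstacle is the Stirling-based matching between the Gevrey factorial $(k!)^{1/t}$ and the extremum of $m^{2k} e^{-\eta m^t}$, carried out in both directions; this is where the index $t$ enters nontrivially, and the constants $\eta_1, \eta_2$ are allowed to shrink as we move from one characterization to another (acceptable since the proposition only asserts existence of positive rates). A secondary technical point is handling the odd-$k$ case $R_k = (1-x^2) D \mathcal{L}^p$ without a separate computation, which the integration-by-parts bound above accomplishes up to a harmless factor absorbed in the constant $L$.
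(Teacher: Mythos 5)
Your proof of the equivalence $\mathrm{(ii)}\Leftrightarrow\mathrm{(iii)}$ coincides with the paper's: Parseval's identity $d_2(v,\mathbb{P}_n)^2=\sum_{k>n}|\hat v_k|^2$, the tail estimate via the integral $\int_n^\infty e^{-2\eta y^t}\,dy$ with a polynomial prefactor absorbed into any $\eta_1<\eta_2$, and the shift of exponent handled by $(n\pm1)^t \lessgtr n^t\pm1$ (the paper uses the subadditivity $(a+b)^t\le a^t+b^t$, which is the same device). Where you genuinely diverge is the link to $\mathrm{(i)}$: the paper simply cites \cite[Thm.~7.2]{BG:72} for $\mathrm{(i)}\Leftrightarrow\mathrm{(ii)}$ and proves nothing, whereas you give a direct, self-contained proof of $\mathrm{(i)}\Leftrightarrow\mathrm{(iii)}$ using the eigenrelation $\mathcal{L}\phi_m=m(m+1)\phi_m$, the integration-by-parts identity $(\mathcal{L}w,w)=\|(1-x^2)^{1/2}Dw\|^2=\sum_m m(m+1)|\hat w_m|^2$ (boundary terms killed by the degenerate weight), and a Stirling/Legendre-transform duality between the factorial bound $L^{k+1}(k!)^{1/t}$ and the coefficient decay $e^{-\eta m^t}$. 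I checked the two optimizations (maximizing $m^{2k}e^{-\eta_2 m^t}$ at $m_*\sim(2k/(\eta_2 t))^{1/t}$ for one direction; minimizing $(m(m+1))^{-p}L^{2p+1}((2p)!)^{1/t}$ over $p\sim \tfrac12(m/L)^t$ for the other, yielding $\eta_2\sim 1/(tL^t)$) and both close correctly, with the odd-$k$ case subsumed by the pointwise bound $1-x^2\le(1-x^2)^{1/2}$ as you say. What your route buys is that it makes the paper self-contained and explains \emph{why} the iterates $R_k$ characterize the class: $\mathcal{L}$ is diagonal in the Legendre basis, so you are essentially re-deriving the Baouendi--Goulaouic characterization in this one-dimensional setting; what the citation buys is brevity. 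Two small points you should add a line for: the integer constraint on $p$ in the minimization (a rounding costs only a bounded factor), and the fact that $\mathrm{(iii)}\Rightarrow\mathrm{(i)}$ also requires $v\in C^\infty$ as demanded by Definition~\ref{def:At} --- termwise differentiation of the Legendre series under \eqref{eq:gevrey-coeff}, using $\|D^jL_k\|_{L^\infty(\bar I)}\lesssim k^{2j}$, gives this on $\bar I$ (the ``neighborhood'' phrasing is a quirk of the paper's definition that the cited theorem glosses over as well).
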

\begin{proof}
The equivalence $(i)\leftrightarrow (ii)$ is proved in 
\cite[Thm. 7.2]{BG:72}. In order to prove the equivalence 
$(ii)\leftrightarrow (iii)$, we observe that 
\begin{equation}
d_2(v,\mathbb{P}_n)=\| v - \sum_{k=0}^n \hat{v}_k \phi_k\|_{L^2(I)}
=\left( \sum_{k>n} \vert \hat{v}_k\vert^2\right)^{1/2}\ .
\end{equation}
Thus, if $(ii)$ holds, we have 
\begin{equation}
\bnew{\vert \hat{v}_{n+1}\vert} \leq d_2(v,\mathbb{P}_n)\leq L_1 e^{-\eta_1 n^t}\ .
\end{equation}
Now, recalling that $(a+b)^t\leq a^t +b^t$ for $a,b\geq 0$ and $t\in (0,1]$, we have
\begin{equation}
e^{-\eta_1 n^t} = \bnew{e^{\eta_1}} e^{-\eta_1 (n^t +1)}=\bnew{e^{\eta_1}} e^{-\eta_1 (n^t +1^t)}\leq 
\bnew{e^{\eta_1}} e^{-\eta_1 (n+1)^t}
\end{equation}
which yields $(iii)$ with $L_2=e^{\eta_1} L_1$ and $\eta_2=\eta_1$. Conversely, let us assume that 
$(iii)$ holds. Then, using again \eqref{eq:gevrey-error} and setting for simplicity $\eta=\eta_2$, we have
\begin{equation}
d_2(v,\mathbb{P}_n)^2 \leq L_2^2 \sum_{k>n} e^{-2\eta k^t} \lesssim 
L_2^2 \int_{n}^{+\infty} e^{-2\eta y^t} dy\ .
\end{equation}
Now, setting $z=y^t$ and $s=1/t \geq 1$, we have
\begin{equation}
\begin{split}
S_n&:=\int_{n}^{+\infty} e^{-2 \eta y^t} dy = s \int_{n^t}^{+\infty} e^{-2\eta z} z^{s-1} dz\\
&= s \, e^{-2\eta n^t} \int_{n^t}^{+\infty} e^{-2\eta (z-n^t)} z^{s-1} dz
~= s \, e^{-2\eta n^t} \int_{0}^\infty e^{-2\eta w} (z+n^t)^{s-1} dw \ .
\end{split}
\end{equation}
The last integral can be bounded by a polynomial of degree $\lceil  s-1 \rceil$. It follows 
that for any $\eta_1 < \eta=\eta_2$ we can find $C_{\eta_1,s}>0$ such that 
$$ S_n \leq C_{\eta_1,s} e^{-2\eta_1 n^t},$$
thus condition $(ii)$ is satisfied.
\end{proof}
\begin{remark}
\rm As a consequence of the previous proposition and the inclusion 
$G^t (\bar{I}) \subseteq \mathscr{A}^t(\bar{I})$, any Gevrey function $v\in G^t(\bar{I})$
admits Legendre coefficients $\hat{v}_k$ which decay according to the law 
\eqref{eq:gevrey-coeff}.
\end{remark}
For analytic functions, the results in $(ii)$ and $(iii)$ of Proposition \ref{prop:gevrey}
can be made more precise as follows.
\begin{proposition}
Let $v\in L^2(I)$ be analytic in the closed ellipse $\mathcal{E}_r\supset [-1, 1]$  
in the complex plane $\mathbb{C}$ with foci in $\pm 1$ and semiaxes' sum equal to $r>1$. 
Then, one has 
\begin{equation}\label{eq:anal-coeff}
\vert \hat{v}_k\vert \leq C(r)\sqrt{2(2k+1)} e^{-{\eta}k}\quad \forall k\geq 0\ ,
\end{equation}
and
\begin{equation}\label{eq:d2}
d_2(v,\mathbb{P}_n)\leq \tilde{C}(r) (n+1)^{1/2} e^{-\eta n}\quad \forall n\geq 0
\end{equation}
for constants $C(r)$ and $\tilde{C}(r)$ only depending on $r$ and ${\eta}=\log r$. 
\end{proposition}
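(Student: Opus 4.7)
The result is the classical Bernstein-type theorem for Legendre expansions, and the natural route is a contour-integral argument on the Bernstein ellipse. The plan is first to establish \eqref{eq:anal-coeff} on the coefficients, and then to derive \eqref{eq:d2} as a direct corollary via Parseval.

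For \eqref{eq:anal-coeff}, I would start from
$$
\hat{v}_k = \sqrt{k+\tfrac12}\int_{-1}^1 v(x)L_k(x)\,dx,
$$
and exploit analyticity of $v$ in $\mathcal{E}_r$ to shift the integration onto a slightly smaller ellipse $\mathcal{E}_{r'}$ with $1 < r' < r$. The key tool is the Neumann/Heine identity
$$
\frac{1}{z-x} = \sum_{k\ge 0}(2k+1)L_k(x)Q_k(z),\qquad x\in[-1,1],\ z\notin[-1,1],
$$
where $Q_k$ denotes the Legendre function of the second kind. Inserting this into Cauchy's formula $v(x)=\frac{1}{2\pi i}\oint_{\partial\mathcal{E}_{r'}}\frac{v(z)}{z-x}dz$ and identifying with the Legendre expansion $v=\sum b_kL_k$ (so that $\hat{v}_k=\sqrt{2/(2k+1)}\,b_k$) yields
$$
b_k = \frac{2k+1}{2\pi i}\oint_{\partial\mathcal{E}_{r'}} v(z)Q_k(z)\,dz.
$$
Using the classical bound $|Q_k(z)|\le C(r')(r')^{-k-1}$ uniformly for $z\in\partial\mathcal{E}_{r'}$ and the fact that $\|v\|_{L^\infty(\partial\mathcal{E}_{r'})}$ is finite by analyticity, I obtain $|b_k|\le C(r')(2k+1)(r')^{-k}$. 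Letting $r'\uparrow r$ (absorbing the sub-exponential loss into the constant $C(r)$) and converting to the orthonormal coefficients gives
$$
|\hat{v}_k| \le C(r)\sqrt{\tfrac{2}{2k+1}}\,(2k+1)\,r^{-k} = C(r)\sqrt{2(2k+1)}\,e^{-\eta k},
$$
with $\eta=\log r$, as desired.

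For \eqref{eq:d2}, since $d_2(v,\mathbb{P}_n)^2=\sum_{k>n}|\hat{v}_k|^2$, I simply plug in \eqref{eq:anal-coeff}:
$$
d_2(v,\mathbb{P}_n)^2 \le 2C(r)^2\sum_{k>n}(2k+1)e^{-2\eta k}.
$$
Bounding the tail of this geometric-like series by its leading term times a factor proportional to the common ratio $(1-e^{-2\eta})^{-1}$ produces a bound of the form $C'(r)(n+1)e^{-2\eta n}$, whence $d_2(v,\mathbb{P}_n)\le \tilde{C}(r)(n+1)^{1/2}e^{-\eta n}$.

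The main technical obstacle is the estimate on $Q_k$ on $\partial\mathcal{E}_{r'}$; this is classical and follows either from the integral representation $Q_k(z)=\frac12\int_{-1}^1 L_k(t)/(z-t)\,dt$ combined with the generating function of the $L_k$, or from the asymptotic formula for $Q_k$ on the Bernstein ellipse. All remaining steps — the Neumann expansion, the passage $r'\uparrow r$, and the Parseval tail estimate — are routine once that ingredient is available.
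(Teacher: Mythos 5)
Your proposal is essentially correct and follows the same route as the paper for the second estimate: the paper also obtains \eqref{eq:d2} by writing $d_2(v,\mathbb{P}_n)^2=\sum_{k>n}|\hat v_k|^2$, inserting \eqref{eq:anal-coeff}, and bounding the tail $\sum_{k>n}(2k+1)e^{-2\eta k}$ by the integral $\int_n^\infty(2y+3)e^{-2\eta y}\,dy\le \frac{3}{\eta}(n+1)e^{-2\eta n}$. The only real difference is in the first estimate: the paper simply cites the bound $|\hat v_k^L|\le C(r)(2k+1)e^{-\eta k}$ for the classically normalized coefficients from Davis (Theorem 12.4.7) and converts normalizations via $\hat v_k^L=\sqrt{k+1/2}\,\hat v_k$, whereas you reprove that cited result from scratch via Cauchy's formula and the Neumann--Heine expansion of $(z-x)^{-1}$ in Legendre functions of the second kind. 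That is indeed the standard proof of the result the paper invokes, so the two arguments are the same in substance; yours is self-contained at the cost of needing the $Q_k$ decay on the Bernstein ellipse.

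One small point to tighten: the step ``letting $r'\uparrow r$ and absorbing the sub-exponential loss into the constant'' is not quite right as stated, since for fixed $r'<r$ the decay rate you get is $e^{-k\log r'}$, and the discrepancy $e^{k\log(r/r')}$ is exponentially growing in $k$, not absorbable into a constant. The clean fix is to use that $v$ is analytic on the \emph{closed} ellipse $\mathcal{E}_r$, hence on an open neighborhood of it, so you may place the contour directly on $\partial\mathcal{E}_r$ (where $\|v\|_{L^\infty}$ is finite and $|Q_k(z)|\le C(r)\,r^{-k-1}$), obtaining the rate $e^{-\eta k}$ with $\eta=\log r$ in one stroke. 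With that adjustment the argument is complete.
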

\begin{proof}
The estimate \eqref{eq:anal-coeff} is a consequence of the bound 
$$ 
\vert \hat{v}_k^L \vert \leq C(r) (2k+1) e^{-\eta k} \quad k\geq 0 
$$ 
given in \cite[Theorem 12.4.7]{Davis:1963}, where 
$\hat{v}_k^L = (k+1/2)\int_{-1}^1 v(x) L_k(x) dx $ and satisfies 
$\hat{v}_k^L=\sqrt{k+1/2}\, \hat{v}_k$. On the other hand, 
\begin{eqnarray}
d_2(v,\mathbb{P}_n)^2 &=& \sum_{k>n} \vert \hat{v}_k\vert^2
~\leq 2 C^2(r) \sum_{k>n} (2k+1) e^{-2\eta k}\nonumber\\
&\leq& 2 C^2(r) \int_n^{+\infty} (2y+3) e^{-2\eta y} dy
~ \leq 6 \frac{C^2(r)}{\eta} (n+1) e^{-2\eta n}\nonumber, 
\end{eqnarray}
\bnew{and \eqref{eq:d2} follows.}
\end{proof}

\noindent There follows that conditions $(ii)$ and $(iii)$ are fulfilled with $t=1$ and any $\eta_1,\eta_2< \log r$.
%%%%%%%%%%%%%%%%%%%%%%%%%%%%%%%%%%%%%%%%%%%
\subsection{Nonlinear approximation and sparsity classes}
From now on, we represent any $v\in V$ according to the BS basis 
\bnew{$\{\eta_k\}_{k=2}^\infty$} as 
$v=\sum_{k=2}^\infty \hat{v}_k \eta_k(x)$.
We recall that given any nonempty finite index set $\Lambda \subset \mathbb{N}_{2}$ and the corresponding subspace
$V_\Lambda \subset V$ of dimension $|\Lambda|=\text{card}\, \Lambda$, 
the best approximation of $v$ in $V_{\Lambda}$ is the orthogonal projection
of $v$ upon $V_{\Lambda}$, i.e. the function $P_{\Lambda} v = \sum_{k \in \Lambda} \hat v_k \eta_k$,
which satisfies
$$
\Vert v - P_{\Lambda} v \Vert= 
\left(\sum_{k \not \in \Lambda} |\hat{v}_k|^2\right)^{1/2} 
$$
(we set $P_\Lambda v = 0$ if $\Lambda=\emptyset$).
For any integer $N \geq 1$, we minimize this error over all possible choices of $\Lambda$
with cardinality $N$, {thereby} leading to the {\sl best $N$-term approximation error}
$$
E_N(v)= \inf_{\Lambda \subset\mathbb{N}_2 , 
\ |\Lambda|=N} \Vert v - P_{\Lambda} v \Vert \;.
$$
A way to construct a {\sl best $N$-term approximation} $v_N$ of $v$ consists of
rearranging the coefficients of $v$ in decreasing order of modulus
$$
\vert \hat{v}_{k_1}\vert \geq \ldots \geq \vert \hat{v}_{k_n} \vert 
\geq \vert \hat{v}_{k_{n+1}} \vert \geq \dots 
$$
and setting $v_N=P_{\Lambda_N}v$ with $\Lambda_N = \{ k_n \ : \ 1 \leq
n \leq N \}$. 
Let us denote from now on $v_n^*=\hat{v}_{k_n}$
the rearranged BS coefficients of $v$. Then,
$$
E_N(v)= \Vert v - \bnew{P_{\Lambda_N} v} \Vert =  \left(\sum_{n>N} |v_n^*|^2 \right)^{1/2} \;.
$$

We will be  interested in classifying functions according to the decay law of their best $N$-term approximations, as $N\to\infty$. To this end, given a strictly decreasing function $\phi : \mathbb{N} \to \mathbb{R}_+$ such that
$\phi(0)=\phi_0$ for some $\phi_0 >0$
and $\phi(N) \to 0$ when $N \to \infty$, we introduce the corresponding
 {\sl sparsity class} ${\mathcal A}_\phi$ by setting
\begin{equation}\label{eq:nl.gen.001}
{\mathcal A}_\phi = {\Big\{ v \in V \ : \ \Vert v \Vert_{{\mathcal A}_\phi}:= \sup_{N \geq 0} 
\, \frac{E_N(v)}{\phi(N)} < +\infty \Big\} \; .}
\end{equation}
The quantity $\Vert v \Vert_{{\mathcal A}_\phi}$ (which need not be a (quasi-)norm, since
${\mathcal A}_\phi$ need not be a linear space) dictates
the minimal number $N_\varepsilon$ of basis functions
needed to approximate $v$ with accuracy $\varepsilon$. {In fact}, from the relations
$$
E_{N_\varepsilon}(v) \leq \varepsilon  < E_{N_\varepsilon-1}(v) 
\leq \phi(N_\varepsilon-1) \Vert v \Vert_{{\mathcal A}_\phi} \;,
$$
{and the monotonicity of $\phi$, we obtain}
\begin{equation}\label{eq:nl.gen.2o}
N_\varepsilon \leq \phi^{-1}\left(\frac{\varepsilon}{\Vert v \Vert_{{\mathcal A}_\phi}}\right) +1 \;.
\end{equation}

Hereafter, we focus on sparsity classes identified by exponentially
decaying $\phi$ of the type $\phi(N)=e^{-\eta N^t}$ for real
\bnew{numbers} $\eta>0$ and $0<t\leq 1$. The following argument
clarifies their relationship with certain families of Gevrey
spaces. Let us introduce the following spaces of Gevrey type: given
real \bnew{numbers} $\eta>0$ and $0<t\leq 1$, we set 
\begin{equation}
G^{\eta,t}(I)=\{v\in V:\quad \| v\|_{G,\eta,t}:= \sum_{k=2}^\infty e^{2\eta k^t} 
\vert \hat{v}_k\vert^2  < + \infty\}\ .
\end{equation}
We immediately observe that $v \in G^{\eta,t}(I)$ implies 
$$ \vert \hat{v}_k\vert^2 \leq e^{- 2\eta k^t} \| v\|_{G,\eta,t}\quad \forall k\geq 2 .$$
Using \eqref{aux:coeff}, we deduce from Proposition \ref{prop:gevrey} that the first derivative of a function $v\in G^{\eta,t}(I)$ belongs to the space $\mathscr{A}^t(\bar{I})$ defined in 
Definition \ref{def:At}; in particular, by using \eqref{inclusion}, $v^{\prime}$ is a Gevrey function in $G^{\frac{t}{2-t}}(\bar{I})$.

Functions in $G^{\eta,t}({I})$ can be approximated by the linear orthogonal projection
on $\mathbb{P}_M^0(I):=\mathbb{P}_M(I)\cap H^1_0(I)$
$$
P_M v = \sum_{k=2}^M \hat{v}_k \eta_k \;,
$$
for which we have
\begin{eqnarray*}
\Vert v -P_M v \Vert^2 &=& \sum_{k>M} |\hat{v}_k|^2 
= \sum_{k > M} \e^{-2\eta k^t} \e^{2\eta k^t} |\hat{v}_k|^2  \\
&\leq & \e^{-2\eta M^t} \sum_{k > M} \e^{2\eta k^t} |\hat{v}_k|^2 \leq 
\e^{-2\eta M^t} \Vert v \Vert_{G,\eta,t}^2 \;.
\end{eqnarray*}
Setting $N:=\text{dim}(\mathbb{P}_M^0)=M-1$, we have 
\begin{equation}\label{eq:nlg.3}
E_N(v) \leq \Vert v -P_{N+1} v \Vert \lsim 
e^{- \eta (N+1)^{t} } \Vert v \Vert_{G,\eta,t}
\leq e^{- \eta N^{t}} \Vert v \Vert_{G,\eta,t} \;.
\end{equation}
Hence, we are led to introduce the function 
\begin{equation}\label{eq:nlg.300}
\phi(N)=e^{- \eta  N^{t} } \quad \qquad (N \geq 1) \;,
\end{equation} 
whose inverse is given by
\begin{equation}\label{eq:nlg.301}
\phi^{-1}(\lambda) = \frac{1}{\eta^{1/t}}\left( \log \frac1\lambda \right)^{1/t}
\quad \qquad (\lambda \leq 1) \;,
\end{equation}
and to consider the corresponding class ${\mathcal A}_\phi$ defined in (\ref{eq:nl.gen.001}), 
which contains $G^{\eta,t}(I)$.
\begin{definition}[{exponential class of functions}]\label{def:AGev} 
We denote by ${\mathcal A}^{\eta,t}_G$ the \bnew{set} defined as
$$
{\mathcal A}^{\eta,t}_G { := \Big\{ v \in V \ : 
\ \Vert v \Vert_{{\mathcal A}^{\eta,t}_G}:= 
\sup_{N \geq 0} \, E_N(v) \, e^{\eta N^{t} } < +\infty \Big\} \;.}
$$
\end{definition}

%%%%%%%%%%%%%%%%%%%%%%%%%%%%%%%%%%%%%%%%%%%%%%%%%%%
%%%%%%%%%%%%%%%%%%%%%%%%%%%%%%%%%%%%%%%%%%%%%%%%%%%
%%%%%%%%%%%%%%%%%%%%%%%%%%%%%%%%%%%%%%%%%%%%%%%%%%%
The class ${\mathcal A}^{\eta,t}_G$ can be equivalently characterized in terms of 
behavior of rearranged sequences of BS coefficients. 
\begin{definition}[{exponential class of sequences}]\label{def:elpicGev}
{Let $\ell_G^{\eta,t}(\mathbb{N}_2)$ be the} subset of sequences 
${\bv} \in \ell^{2}(\mathbb{N}_2)$ so that \looseness=-1
$$
\Vert {\bv} \Vert_{\ell_G^{\eta,t}(\mathbb{N}_2)} := \sup_{n \geq 1} 
\bnew{\Big( n^{(1-t)/2} {\rm exp}\left(\eta n^{t} \right)
%\e^{\eta \omega_d^{t/d} n^{t/d}} 
|v_n^*| \Big)} < +\infty \;,
$$
where {${\bv}^*=(v_n^*)_{n=1}^\infty$} is the non-increasing rearrangement of ${\bv}$.
\end{definition}

%%%%%%%%%%%

The relationship between  ${\mathcal A}^{\eta,t}_G$ and $\ell_G^{\eta,t}(\mathbb{N}_2)$ is stated in the following Proposition, whose proof is a straightforward adaptation 
of the one given in \cite[Proposition 4.2]{CNV:2011}.
\begin{proposition}[{equivalence of exponential classes}]\label{prop:nlg1}
Given a function $v \in V$ and the sequence ${\bm v}=(\hat{v}_k)_{k \in \mathbb{N}_2}$ of its BS coefficients,
 one has  $v \in {\mathcal A}^{\eta,t}_G$ if and only if
${\bv} \in \ell_G^{\eta,t}(\mathbb{N}_2)$, with
$$
\Vert v \Vert_{{\mathcal A}^{\eta,t}_G} \lsim 
\Vert {\bv} \Vert_{\ell_G^{\eta,t}(\mathbb{N}_2)}
\lsim \Vert v \Vert_{{\mathcal A}^{\eta,t}_G}\,.
$$
\end{proposition}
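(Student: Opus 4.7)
The proof is an adaptation of \cite[Proposition 4.2]{CNV:2011}, and I would proceed by establishing the two inclusions separately, keying off the non-increasing rearrangement $\bv^*=(v_n^*)$ of the BS coefficients and the identity
$$
E_N(v)^2 \;=\; \sum_{n>N} |v_n^*|^2\;.
$$

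For the direction $\Vert v\Vert_{\mathcal{A}^{\eta,t}_G}\lesssim\Vert\bv\Vert_{\ell^{\eta,t}_G(\mathbb{N}_2)}$, I would assume a bound $|v_n^*|\le C\, n^{-(1-t)/2}e^{-\eta n^t}$ (with $C=\Vert\bv\Vert_{\ell^{\eta,t}_G(\mathbb{N}_2)}$) and compute
$$
E_N(v)^2 \;\le\; C^2\sum_{n>N} n^{-(1-t)}e^{-2\eta n^t}
\;\le\; C^2\int_N^{+\infty} x^{t-1}e^{-2\eta x^t}\,dx\;.
$$
The substitution $y=x^t$ reduces the last integral to $\frac{1}{2\eta t}e^{-2\eta N^t}$, which gives $E_N(v)\,e^{\eta N^t}\lesssim C$ for every $N\ge 0$, hence $v\in\mathcal{A}^{\eta,t}_G$ with the claimed control of the norm.

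For the converse $\Vert\bv\Vert_{\ell^{\eta,t}_G(\mathbb{N}_2)}\lesssim\Vert v\Vert_{\mathcal{A}^{\eta,t}_G}$, I would exploit that $|v_n^*|$ is non-increasing, so for any integer $N$ with $0\le N<n$,
$$
(n-N)\,|v_n^*|^2 \;\le\; \sum_{k=N+1}^{n}|v_k^*|^2 \;\le\; E_N(v)^2 \;\le\; C^2 e^{-2\eta N^t}\;,
$$
where now $C=\Vert v\Vert_{\mathcal{A}^{\eta,t}_G}$. The clean choice $N=n-1$ loses the algebraic factor $n^{-(1-t)/2}$, and the choice $N=\lfloor n/2\rfloor$ degrades the exponent from $n^t$ to $(n/2)^t$. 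The right trade-off is $N:=n-\lceil n^{1-t}\rceil$, giving $n-N\gtrsim n^{1-t}$ and, by concavity of $x\mapsto x^t$,
$$
n^t - N^t \;\le\; t\, N^{t-1}(n-N) \;\lesssim\; n^{t-1}\cdot n^{1-t} \;=\; 1\;,
$$
so that $e^{-2\eta N^t}\lesssim e^{-2\eta n^t}$. Plugging back yields $|v_n^*|^2\lesssim C^2 n^{-(1-t)}e^{-2\eta n^t}$, i.e.\ $n^{(1-t)/2}e^{\eta n^t}|v_n^*|\lesssim C$ uniformly in $n$, which is the desired bound.

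The only delicate step is the second direction, specifically the calibration $n-N\sim n^{1-t}$: it must be large enough to yield the algebraic factor $n^{-(1-t)/2}$ yet small enough that $N^t$ differs from $n^t$ only by $O(1)$. Once that is in place, the two estimates combine to give the equivalence of norms as stated, and Proposition~\ref{prop:nlg1} follows.
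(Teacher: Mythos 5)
Your argument is correct, and it is essentially the proof the paper has in mind: the paper itself only remarks that the result is ``a straightforward adaptation of \cite[Proposition 4.2]{CNV:2011}'', and your two directions (integral comparison $\sum_{n>N} n^{t-1}e^{-2\eta n^t}\lesssim e^{-2\eta N^t}$ for one inclusion, and the calibration $N=n-\lceil n^{1-t}\rceil$ with $(n-N)|v_n^*|^2\le E_N(v)^2$ for the other) reproduce exactly that standard argument. The only cosmetic omission is the finitely many small $n$ for which $N\gtrsim n$ may fail, which are handled trivially via $|v_n^*|\le E_0(v)$.
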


As shown in \cite{CNV:2011} the set $\ell_G^{\eta,t}(\mathbb{N}_2)$ is not
a vector space, since it may happen that ${\bu},\,{\bv}$ belong to this set, whereas
${\bu}+{\bv}$ does not. On the other hand, we have the following property 
(see \cite[Lemma 4.1]{CNV:2011}).

\begin{property}[{quasi-triangle inequality}]\label{L:nlg2}
If ${\bu}_i\in {\ell_G^{\eta_i,t}(\mathbb{N}_2)}$ for $i=1,2$, then 
${\bu}_1+{\bu}_2 \in {\ell_G^{\eta,t}(\mathbb{N}_2)}$ with
\[
{\|{\bu}_1+{\bu}_2\|_{\ell_{G}^{\eta,t}(\mathbb{N}_2)} \le
\|{\bu}_1\|_{\ell_{G}^{\eta_1,t}(\mathbb{N}_2)} + \|{\bu}_2\|_{\ell_{G}^{\eta_2,t}(\mathbb{N}_2)},}
\qquad
\eta^{-\frac{1}{t}} = \eta_1^{-\frac{1}{t}} +
\eta_2^{-\frac{1}{t}}.
\]
\end{property}
%

%%%%%%%%%%%%%%%%%%%%%%%%%%%%%%%%%%%%%%%%%%%%%%%%%%%
%%%%%%%%%%%%%%%%%%%%%%%%%%%%%%%%%%%%%%%%%%%%%%%%%%%%%%%%%%%%%%%%%%%%%%%%%%%%%%%%%%%%%%
\section{Sparsity classes of the residual \bnew{and the solution}}\label{sec:spars-res}
%%%%%%%%%%%%%%%%%%%%%%%%%%%%%%%%%%%%%%%%%%%%%%%%%%%%%%%%%%%%%%%%%%%%%%%%%%%%%%%%%%%%%%
This section is devoted to the study of the sparsity class of the residual
$r=r(u_\Lambda)$ produced by any Galerkin solution $u_\Lambda\in V_\Lambda$, and its connection with the sparsity class of the exact solution $u$. Indeed, the step
$$
\partial\Lambda := \text{\bf D\"ORFLER}(r, \theta)
$$
selects a set {$\partial\Lambda$}
of minimal cardinality in $\Lambda^c$ for which $\Vert r-P_{\partial\Lambda}r \Vert
\leq \sqrt{1-\theta^2} \Vert r \Vert$. Thus, 
if $r$ belongs to a certain sparsity class ${\mathcal A}_{\bar{\phi}}$, identified by a function $\bar{\phi}$ according to \eqref{eq:nl.gen.001}, 
then (\ref{eq:nl.gen.2o}) yields
\begin{equation}\label{eq:boundDorfler} 
|\partial\Lambda| \leq {\bar{\phi}}^{-1}\left( \, 
\frac{\sqrt{1-\theta^2}~\Vert r \Vert}{\ \Vert r \Vert_{{\mathcal A}_{\bar{\phi}}}}\right) +1 \;.
\end{equation}
\bnew{Specifically}, if $r \in {\mathcal A}^{\bar{\eta},\bar{t}}_G$ for some $\bar{\eta}>0$ and $\bar{t}>0$, we have by (\ref{eq:nlg.301})
\begin{equation}\label{aux:residual}
|\partial\Lambda| \leq \frac{1}{\bnew{\bar\eta}^{1/\bar{t}}} \left(
 \log \frac{\Vert r \Vert_{{\mathcal A}^{\bar{\eta},\bar{t}}_G}}{\sqrt{1-\theta^2}~\Vert r \Vert} \right)^{1/\bar{t}} + 1 \;.
\end{equation}

We begin by investigating the sparsity class of the image $Lv$, when
the function $v$ belongs to the sparsity class
$\mathcal{A}^{\eta,t}_G$. 
\bnew{It turns out that} the sparsity classes of $v$ and $Lv$ are equivalent, in view of Proposition \ref{prop:nlg1}, to 
the sparsity classes of the related 
vectors $\mathbf{v}$ and $\mathbf{A} \mathbf{v}$, where $\mathbf{A}$ is the stiffness matrix (\ref{eq:four100}).
\begin{proposition}[{continuity of $L$ in $\mathcal{A}^{\eta,t}_G$}]\label{propos:spars-res}
Let the differential operator $L$ be such that the corresponding stiffness matrix satisfies 
$\mathbf{A} \in {\mathcal D}_e(\eta_L)$ for some constant $\eta_L>0$.
Assume that $v \in {\mathcal A}^{\eta,t}_G$ for some $\eta>0$ and $t \in (0,1]$. 
Let one of the two following set of conditions be satisfied.
\begin{enumerate}[\rm (a)]
\item
{If the} matrix $\mathbf{A}$ is banded with $2p+1$ non-zero
diagonals, {let us set}
$$
\bar{\eta}= \frac{\eta}{(2p+1)^t} \;, \qquad \bar{t}= t \;.
$$
\item
{If the matrix $\mathbf{A}$ is dense}, but the coefficients $\eta_L$ and $\eta$ satisfy
the inequality $\eta< \eta_L $, {let us set} 
$$
\bar{\eta}= \zeta(t)\eta \;, \qquad \bar{t}= \frac{t}{1+t} \;,
$$
where we define 
\begin{equation}\label{aux-funct}
\zeta(t) := \Big( \frac{1+t}{2} \Big)^{\frac{t}{1+t}}
\qquad\forall\; 0<t\le 1.
\end{equation}
\end{enumerate}
Then, one has $Lv \in {\mathcal A}^{\bar{\eta},\bar{t}}_G$, with
\begin{equation}\label{eq:spars11bis}
\Vert Lv \Vert_{{\mathcal A}_G^{\bar{\eta},\bar{t}}} \lsim 
\Vert v \Vert_{{\mathcal A}_G^{\eta,t}} \;.
\end{equation}
\end{proposition}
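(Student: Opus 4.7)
The plan is to transfer the problem to the sequence level via Proposition~\ref{prop:nlg1}, which identifies $v\in\mathcal{A}_G^{\eta,t}$ with $\mathbf{v}=(\hat v_k)\in\ell_G^{\eta,t}(\mathbb{N}_2)$. For any integer $N\ge 1$, let $\Lambda_N$ collect the indices of the $N$ largest coefficients of $\mathbf{v}$ and set $\mathbf{v}_N:=P_{\Lambda_N}\mathbf{v}$, so that $\|\mathbf{v}-\mathbf{v}_N\|=E_N(v)\lesssim e^{-\eta N^t}\,\|v\|_{\mathcal{A}_G^{\eta,t}}$. Since $\|Lv\|=\|\mathbf{A}\mathbf{v}\|_{\ell^2}$ by the very definition of the dual BS basis, I will construct an explicit sparse approximation of $\mathbf{A}\mathbf{v}$, estimate its $\ell^2$-distance from $\mathbf{A}\mathbf{v}$, and compare the resulting best $M$-term error against the profile $e^{-\bar\eta M^{\bar t}}$ to conclude $Lv\in\mathcal{A}_G^{\bar\eta,\bar t}$.

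For case~(a), since $\mathbf{A}$ has only $2p+1$ nonzero diagonals, $\mathbf{A}\mathbf{v}_N$ is automatically supported in the dilated index set $\widetilde\Lambda_N:=\{m\in\mathbb{N}_2:\operatorname{dist}(m,\Lambda_N)\le p\}$, which has cardinality at most $M:=(2p+1)N$. Splitting $\mathbf{A}\mathbf{v}=\mathbf{A}\mathbf{v}_N+\mathbf{A}(\mathbf{v}-\mathbf{v}_N)$ and invoking the boundedness of $\mathbf{A}$ from Property~\ref{prop:bounded} yields
\[
E_M(Lv)\le \|\mathbf{A}\mathbf{v}-P_{\widetilde\Lambda_N}\mathbf{A}\mathbf{v}\|\le \|\mathbf{A}(\mathbf{v}-\mathbf{v}_N)\|\lesssim e^{-\eta N^t}=e^{-\bar\eta M^t},
\]
with $\bar\eta=\eta/(2p+1)^t$, settling case~(a).

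In case~(b), the matrix is dense, so $\mathbf{A}\mathbf{v}_N$ has in general infinite support. I therefore truncate $\mathbf{A}$ to its $J$-band $\mathbf{A}_J$, for which Property~\ref{prop:matrix-estimate} gives $\|\mathbf{A}-\mathbf{A}_J\|\lesssim e^{-\eta_L J}$; the vector $\mathbf{A}_J\mathbf{v}_N$ is supported on a set of cardinality at most $M:=(2J+1)N$. A triangle-inequality split yields
\[
\|\mathbf{A}\mathbf{v}-\mathbf{A}_J\mathbf{v}_N\|\le \|\mathbf{A}(\mathbf{v}-\mathbf{v}_N)\|+\|(\mathbf{A}-\mathbf{A}_J)\mathbf{v}_N\|\lesssim e^{-\eta N^t}+e^{-\eta_L J}.
\]
Balancing the two error contributions forces $J\propto N^t$, which gives $M\propto N^{1+t}$ and hence $N^t\propto M^{t/(1+t)}$; this already identifies the degraded exponent $\bar t=t/(1+t)$. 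A careful choice of the proportionality constant, using the strict inequality $\eta<\eta_L$ to keep the matrix-truncation error at or below the vector-truncation error, then produces the rate $\bar\eta=\zeta(t)\eta$ with $\zeta(t)$ as in \eqref{aux-funct}.

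The hard part is case~(b): one must calibrate the two truncation parameters $N$ and $J$ so that the resulting constant $\bar\eta$ depends only on $\eta$ and $t$, rather than on $\eta_L$ as a naive balance would give. A sharper pointwise bound on $\mathbf{A}\mathbf{v}$ is not available, because the class $\mathcal{A}_G^{\eta,t}$ only controls the exponential decay of the \emph{rearranged} coefficients of $\mathbf{v}$; the non-zero positions may in principle be distributed arbitrarily, which is precisely what forces the use of the banded $\mathbf{A}_J$ and the attendant loss of sparsity from $t$ down to $t/(1+t)$. This gap between the sparsity classes of $v$ and of $Lv$ is the structural reason for the coarsening step introduced in Section~\ref{sec:coarsening}.
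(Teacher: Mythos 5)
Your case (a) is correct and reaches the same conclusion as the paper. In case (b), however, there is a genuine gap precisely where you flag ``the hard part'': a single uniform truncation $\mathbf{A}_J\mathbf{v}_N$ cannot be calibrated to yield $\bar\eta=\zeta(t)\eta$. From $\Vert\mathbf{A}\mathbf{v}-\mathbf{A}_J\mathbf{v}_N\Vert\lesssim e^{-\eta N^t}+e^{-\eta_L J}$ and support size $M\sim 2JN$, the optimal balance is $J=(\eta/\eta_L)N^t$, giving $E_M(Lv)\lesssim e^{-\eta N^t}$ with $M\sim 2(\eta/\eta_L)N^{1+t}$, hence
\[
\bar\eta \;=\; \eta\Big(\frac{\eta_L}{2\eta}\Big)^{\frac{t}{1+t}}
\]
(any other choice of the proportionality constant only lowers this: for $J=cN^t$ one gets $\bar\eta=\eta_L\,2^{-t/(1+t)}c^{1/(1+t)}$ when $c\le\eta/\eta_L$ and $\bar\eta=\eta(2c)^{-t/(1+t)}$ when $c\ge\eta/\eta_L$, maximized at $c=\eta/\eta_L$). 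This constant depends on $\eta_L$ and is \emph{strictly smaller} than $\zeta(t)\eta=\eta\big(\tfrac{1+t}{2}\big)^{t/(1+t)}$ whenever $\eta<\eta_L<(1+t)\eta$, a regime permitted by the hypothesis $\eta<\eta_L$. So no ``careful choice of the proportionality constant'' rescues the stated estimate within your scheme.

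The paper's device for removing the $\eta_L$-dependence is a graded, multilevel truncation rather than a single band: write $\mathbf{v}=(\mathbf{v}-\mathbf{v}_J)+\sum_{j=1}^J(\mathbf{v}_j-\mathbf{v}_{j-1})$, where $\mathbf{v}_j$ is a best $j$-term approximation, and apply to the $j$-th increment the truncation $\mathbf{A}_{\lceil (J-j)^t\rceil}$, i.e.\ the larger the coefficient, the wider the band. The matrix-truncation contribution is then $\sum_{j=1}^J e^{-(\eta_L\lceil(J-j)^t\rceil+\eta j^t)}$, whose exponent is bounded below by $(\eta_L-\eta)(J-j)^t+\eta J^t$ using only $\eta_L>\eta$ and subadditivity of $x\mapsto x^t$; summing the tail leaves the total error at $e^{-\eta J^t}$ with \emph{no loss in the rate constant}. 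The entire degradation is then charged to the cardinality, $|\Lambda_J|\le 2\sum_{j=0}^{J-1}\lceil j^t\rceil\sim\frac{2}{1+t}J^{1+t}$, which is exactly what produces the factor $\big(\frac{1+t}{2}\big)^{t/(1+t)}=\zeta(t)$. The precise value of $\bar\eta$ is not cosmetic: Proposition \ref{prop:unif-bound-res-exp} composes three applications of this result through the explicit $\zeta$-chain, so the weaker, $\eta_L$-dependent constant would not suffice. Your closing structural remark about why the class degrades (and why coarsening is needed) is on target.
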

\proof
The proof is an adaptation of a similar result in the periodic case given in \cite[Proposition 5.3]{CNV:2011}; we report here the details for completeness. Let $\mathbf{A}_J$ be the truncation of the stiffness matrix defined in 
(\ref{eq:trunc-matr}); thus, by Property \ref{prop:matrix-estimate} we have
$
\Vert \mathbf{A}-\mathbf{A}_J \Vert \leq
C_{\mathbf{A}} {\rm e}^{-\eta_L J}, 
$
$J\geq 0$.
On the other hand, for any $j \geq 1$, let $\mathbf{v}_j=\mathbf{P}_j(\mathbf{v})$ be a best $j$-term approximation of $\mathbf{v}$ (with $\mathbf{v}_{0}=0$), 
which therefore satisfies $\Vert \mathbf{v}-\mathbf{v}_j \Vert \leq {\rm e}^{-\eta  j^t} 
\Vert {\bv} \Vert_{\ell_G^{\eta,t}(\mathbb{N}_2)}$. Note that the difference $\bv_j -\bv_{j-1}$ 
consists of a single component and satisfies as well
$$
\Vert \bv_j-\bv_{j-1} \Vert \lsim {\rm e}^{-\eta  j^t} 
\Vert {\bv} \Vert_{\ell_G^{\eta,t}(\mathbb{N}_2)} \;. 
$$
Finally, let us introduce the function $\chi \, : \, \mathbb{N} \to \mathbb{N}$ defined 
as $\chi(j)=\lceil j^t \rceil$, the smallest integer larger than or equal to $j^t$.

For any $J \geq 1$, let $\bw_J$ be the approximation of $\mathbf{A}\bv$ defined as
$$
\bw_J = \sum_{j=1}^J \mathbf{A}_{\chi(J-j)}(\bv_j-\bv_{j-1}) \;.
$$
Writing $\bv=\bv-\bv_J + \sum_{j=1}^J (\bv_j-\bv_{j-1})$, we obtain 
$$
\mathbf{A}\bv-\bw_J = \mathbf{A}(\bv-\bv_J)
+ \sum_{j=1}^J (\mathbf{A}-\mathbf{A}_{\chi(J-j)})(\bv_j-\bv_{j-1})\;.
$$ 
{We now assume to be in Case (b). Since
  $\mathbf{A}:\ell^2(\mathbb{N}_2)\to\ell^2(\mathbb{N}_2)$ is continuous}, the last equation yields
\begin{equation}\label{eq:spars11}
\Vert \mathbf{A}\bv-\bw_J \Vert \lsim \left( {\rm e}^{-\eta  J^t} 
+ \sum_{j=1}^J {\rm e}^{-(\eta_L \lceil (J-j)^t \rceil +\eta  j^t )}
\right) \Vert {\bv} \Vert_{\ell_G^{\eta,t}(\mathbb{N}_2)} \;.
\end{equation}
The exponents of the addends can be bounded from below as follows
{because $t\le 1$}
\begin{eqnarray*}
\eta_L \lceil (J-j)^t \rceil +\eta  j^t &=&
\eta_L \lceil (J-j)^t \rceil - \eta  (J-j)^t + \eta ( (J-j)^t + j^t) \\
&\geq& \eta_L (J-j)^t  - \eta (J-j)^t + \eta ((J-j) + j)^t \\
&=& \beta (J-j)^t + \eta J^t \;, 
\end{eqnarray*}
with $\beta= \eta_L -\eta  >0$ by assumption. Then,
(\ref{eq:spars11}) yields
\begin{equation}\label{eq:spars12}
\Vert \mathbf{A}\bv-\bw_J \Vert \lsim \left( 1 + \sum_{j=0}^{J-1} 
 {\rm e}^{-\beta j^t} \right) 
{\rm e}^{-\eta J^t} \Vert {\bv} \Vert_{\ell_G^{\eta,t}(\mathbb{N}_2)}\lsim
{\rm e}^{-\eta J^t} \Vert {\bv} \Vert_{\ell_G^{\eta,t}(\mathbb{N}_2)} \;.
\end{equation}

On the other hand, by construction $\bw_J$ belongs to a finite
dimensional space $\mathbf{V}_{\Lambda_J}$, where
{
\begin{equation}\label{eq:spars13}
|\Lambda_J| \leq 2 \sum_{j=1}^J \chi(J-j) = 
2 \sum_{j=0}^{J-1} \lceil j^t \rceil 
\sim \frac{2}{1+t} J^{1+t} \qquad \text{as } J \to \infty \;. 
\end{equation}
This implies
$$
\Vert \mathbf{A}\bv-\bw_J \Vert \lsim {\rm e}^{-\bar{\eta}  |\Lambda_J|^{\bar t}}
\Vert {\bv} \Vert_{\ell_G^{\eta,t}(\mathbb{N}_2)} \;,
$$
with $\bar t = \frac{t}{1+t}$ and 
$\bar\eta = \left(\frac{1+t}{2}\right)^{\bar t}\eta=\zeta(t)\eta$
as asserted.} 

We last consider Case (a). \bnew{Since $\mathbf{A}_{\chi(J-j)}=\mathbf{A}$ if
$\chi(J-j) \geq p$, for $\chi(J-j)\le p$, whence
$j \geq J-p^{1/t}$}, the summation in (\ref{eq:spars11}) can be limited to those $j$
satisfying $j_p \leq j \leq J$, where $j_p= \lceil J- p^{1/t} \rceil$. Therefore 
$$
\Vert \mathbf{A}\bv-\bw_J \Vert \lsim \left( {\rm e}^{-\eta J^t} 
+ \sum_{j=j_p}^J \bnew{{\rm e}^{- \eta  j^t }}
\right)\Vert {\bv} \Vert_{\ell_G^{\eta,t}(\mathbb{N}_2)} \;.
$$
Now, $J-j \leq p^{1/t}$ if $j_p\leq j \leq J$ and $j^t \geq j_p^t \geq (J-p^{1/t})^t \geq J^t - p$, whence
$$
\Vert \mathbf{A}\bv-\bw_J \Vert \lsim \left(1+ {\rm e}^{\eta p}
\right) {\rm e}^{-\eta  J^t} \Vert {\bv} \Vert_{\ell_G^{\eta,t}(\mathbb{N}_2)} \;.
$$
We conclude by observing that $|\Lambda_J|\leq (2p+1)J$, since any matrix $\mathbf{A}_J$ has 
at most $2p+1$ diagonals.  \endproof

Before studying the sparsity class of the residual, it is convenient to rewrite the Galerkin problem (\ref{eq:four.2}) in an equivalent (infinite-dimensional) way. To this end, let  $\mathbf{u}_\Lambda \in \mathbb{R}^{|\Lambda|}$ be the vector collecting the coefficients
of $u_\Lambda$ indexed in $\Lambda$; 
let $\mathbf{f}_\Lambda \in \mathbb{R}^{|\Lambda|}$ be the analogous restriction for
the vector of the coefficients of $f$. Finally, denote by $\mathbf{R}_\Lambda$ the
matrix that restricts a semi-infinite vector to the portion indexed in $\Lambda$, so that
$\mathbf{E}_\Lambda=\mathbf{R}_\Lambda^H$ is the corresponding extension matrix.
Then, setting
\begin{equation}\label{eq:four120}
\mathbf{A}_\Lambda = \mathbf{R}_\Lambda \mathbf{A} \mathbf{R}_\Lambda^H \;,
\end{equation}
 we preliminary observe that problem (\ref{eq:four.2}) can be equivalently written as
\begin{equation}\label{eq:four130}
\mathbf{A}_\Lambda \mathbf{u}_\Lambda = \mathbf{f}_\Lambda \;. 
\end{equation}  
%%%%%%
Next, let  $\mathbf{P}_\Lambda: \ell^2(\mathbb{N}_2) \to \ell^2(\mathbb{N}_2)$ be the projector operator defined as 
\[
(\mathbf{P}_\Lambda \mathbf{v})_\lambda=
\begin{cases}
v_\lambda & \text{\rm if } \lambda\in\Lambda \;, \\
0 & \text{\rm if } \lambda\notin\Lambda \;.
\end{cases}
\]
Note that $\mathbf{P}_\Lambda$ can be represented  as a diagonal semi-infinite matrix whose diagonal elements 
are $1$ for indexes belonging to $\Lambda$, \bnew{and} zero otherwise.  
Let us set $\mathbf{Q}_\Lambda=\mathbf{I}-\mathbf{P}_\Lambda$ and 
\bnew{introduce} the semi-infinite matrix $\widehat{\mathbf{A}}_\Lambda:=
\mathbf{P}_\Lambda \mathbf{A} \mathbf{P}_\Lambda + \mathbf{Q}_\Lambda$ which 
is equal to $\mathbf{A}_\Lambda$ for indexes in $\Lambda$ and to the identity matrix, otherwise. 
The definitions of the projectors $\mathbf{P}_\Lambda$ and $\mathbf{Q}_\Lambda$ easily yield the following result. 
\begin{property}[{invertibility of $\widehat\bA_\Lambda$}]\label{prop:inf-matrix}
If $\mathbf{A}$ is invertible with $\mathbf{A}\in\mathcal{D}_e(\eta_L)$, then the same holds for $\widehat{\mathbf{A}}_\Lambda$. \endproof
\end{property}

\noindent Finally, the infinite dimensional version of the Galerkin problem (\ref{eq:four.2}) reads as follows: 
find $\hat{\mathbf{u}}\in\ell^2(\mathbb{N}_2)$ such that  
\begin{equation}\label{eq:inf-pb-galerkin}
\widehat{\mathbf{A}}_\Lambda \hat{\mathbf{u}}
= \mathbf{P}_\Lambda \mathbf{f}\ .
\end{equation}
Let  $\bnew{{\mathbf{E}}_\Lambda={\mathbf{R}}_\Lambda^H}: \mathbb{R}^{\vert \Lambda\vert} \to \ell^2(\mathbb{N}_2)$ be the extension operator 
\bnew{and} let $\mathbf{u}_\Lambda\in \mathbb{R}^{\vert \Lambda\vert}$ be the Galerkin solution to  \eqref{eq:four130};
then, it is easy to check that $\hat{\mathbf{u}}={\mathbf{E}}_\Lambda \mathbf{u}_\Lambda$. 
%%%%%%%%%%%%%%%%%%%%%%%%%%%%%%%%v%%%%%%%%

We are now ready to state the main result of this section.
\begin{proposition}[{sparsity class of the residual}]\label{prop:unif-bound-res-exp}
{Let $\mathbf{A}\in\mathcal{D}_e(\eta_L)$ and
$\bA^{-1} \in\mathcal{D}_e(\bar\eta_L)$, for constants $\eta_L>0$
and $\bar\eta_L\in(0,\eta_L]$ according to Property
\ref{prop:inverse.matrix-estimate}. If $u \in {\mathcal A}^{\eta,t}_G$ for some $\eta>0$ and $t \in (0,1]$, such that $\eta < {\bar\eta_L}$, then
there exist suitable positive constants $\bar{\eta} \leq \eta$ and 
$\bar{t} \leq t$ such that
$r(u_\Lambda) \in {\mathcal A}_G^{\bar{\eta},\bar{t}}$ for any index
set $\Lambda$, with
}
$$
\Vert r(u_\Lambda) \Vert_{{\mathcal A}_G^{\bar{\eta},\bar{t}}} \lsim
\Vert u \Vert_{{\mathcal A}^{\eta,t}_G} \;.
$$
\end{proposition}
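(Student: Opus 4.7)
The plan is to rewrite $\mathbf{r}(u_\Lambda)$ as the action of a single operator in the class $\mathcal{D}_e$, uniformly in $\Lambda$, upon a vector whose sparsity is controlled by that of $\mathbf{u}$; then Proposition~\ref{propos:spars-res} can be invoked directly and yields the desired exponent degradation from $(\eta,t)$ to $(\zeta(t)\eta,\,t/(1+t))$ in one shot. Exploiting the Galerkin orthogonality $\mathbf{P}_\Lambda\mathbf{r}(u_\Lambda)=0$ I would first write $\mathbf{r}(u_\Lambda)=\mathbf{Q}_\Lambda\mathbf{A}(\mathbf{u}-\hat{\mathbf{u}})$, where $\hat{\mathbf{u}}=\mathbf{E}_\Lambda\mathbf{u}_\Lambda$. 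Since $\widehat{\mathbf{A}}_\Lambda\hat{\mathbf{u}}=\mathbf{P}_\Lambda\mathbf{A}\mathbf{u}$ by \eqref{eq:inf-pb-galerkin} and $\widehat{\mathbf{A}}_\Lambda\mathbf{P}_\Lambda\mathbf{u}=\mathbf{P}_\Lambda\mathbf{A}\mathbf{P}_\Lambda\mathbf{u}$ by the very definition of $\widehat{\mathbf{A}}_\Lambda$, subtraction and inversion give $\hat{\mathbf{u}}-\mathbf{P}_\Lambda\mathbf{u}=\widehat{\mathbf{A}}_\Lambda^{-1}\mathbf{P}_\Lambda\mathbf{A}\mathbf{Q}_\Lambda\mathbf{u}$; combined with $\mathbf{u}-\hat{\mathbf{u}}=\mathbf{Q}_\Lambda\mathbf{u}-(\hat{\mathbf{u}}-\mathbf{P}_\Lambda\mathbf{u})$ this produces the key identity
\[
\mathbf{r}(u_\Lambda)=\mathbf{Q}_\Lambda\,\mathbf{B}_\Lambda\,\mathbf{Q}_\Lambda\mathbf{u},
\qquad
\mathbf{B}_\Lambda:=\mathbf{A}-\mathbf{A}\widehat{\mathbf{A}}_\Lambda^{-1}\mathbf{P}_\Lambda\mathbf{A},
\]
in which only the unknown $\mathbf{u}$ appears on the right.

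Next I would verify that $\mathbf{B}_\Lambda\in\mathcal{D}_e(\eta^*)$ \emph{uniformly in $\Lambda$} for some $\eta^*\in(\eta,\bar{\eta}_L)$ (such an $\eta^*$ exists by the hypothesis $\eta<\bar{\eta}_L$). The matrix $\widehat{\mathbf{A}}_\Lambda$ inherits the exponential decay of $\mathbf{A}$ with constants that do not depend on $\Lambda$, because its off-diagonal entries either coincide with those of $\mathbf{A}$ or vanish while its diagonal is bounded below by $\min(\min_\ell a_{\ell,\ell},\,1)>0$; Property~\ref{prop:inverse.matrix-estimate} then provides $\widehat{\mathbf{A}}_\Lambda^{-1}\in\mathcal{D}_e(\bar{\eta}_L)$ with constants again independent of $\Lambda$. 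A routine convolution estimate of exponentials shows that $\mathcal{D}_e$ is stable under matrix sums and products up to an arbitrarily small loss in the decay rate; since $\mathbf{P}_\Lambda\mathbf{A}$ trivially belongs to $\mathcal{D}_e(\eta_L)$ uniformly in $\Lambda$, applying this closure property to the three-factor product inside $\mathbf{B}_\Lambda$ yields the claimed $\Lambda$-uniform membership.

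The argument is completed by applying Proposition~\ref{propos:spars-res} (Case (b), justified by $\eta<\eta^*$) with $\mathbf{B}_\Lambda$ playing the role of $\mathbf{A}$ and with the vector $\mathbf{Q}_\Lambda\mathbf{u}$, which satisfies $\|\mathbf{Q}_\Lambda\mathbf{u}\|_{\ell_G^{\eta,t}}\lsim\|u\|_{\mathcal{A}_G^{\eta,t}}$ since the coordinate truncation $\mathbf{Q}_\Lambda$ cannot increase any best $N$-term approximation error and by Proposition~\ref{prop:nlg1}; the final left multiplication by $\mathbf{Q}_\Lambda$ is likewise harmless. The main obstacle is precisely the $\Lambda$-uniform control of $\widehat{\mathbf{A}}_\Lambda^{-1}$: one has to verify that the smallness condition \eqref{restriction-cL} continues to hold when $\mathbf{A}$ is replaced by $\widehat{\mathbf{A}}_\Lambda$, with constants independent of $\Lambda$, so that Property~\ref{prop:inverse.matrix-estimate} furnishes a common $\bar{\eta}_L$ across all index sets $\Lambda$. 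The other ingredients—the algebraic identity derived above, the product closure of $\mathcal{D}_e$, and the fact that the proof of Proposition~\ref{propos:spars-res} uses only $\mathbf{A}\in\mathcal{D}_e(\eta_L)$ and hence extends verbatim to any operator in $\mathcal{D}_e(\eta^*)$—are all routine.
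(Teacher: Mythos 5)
Your argument is correct in its essentials, but it takes a genuinely different route from the paper's, and in fact a sharper one. The paper never isolates a single operator acting on $\mathbf{u}$: it writes $\mathbf{r}_\Lambda=\mathbf{A}(\mathbf{u}-\mathbf{u}_\Lambda)$, applies Proposition~\ref{propos:spars-res} to $\mathbf{A}$, splits $\mathbf{u}-\mathbf{u}_\Lambda$ by the quasi-triangle inequality (Property~\ref{L:nlg2}), bounds $\mathbf{u}_\Lambda=(\widehat{\mathbf{A}}_\Lambda)^{-1}\mathbf{P}_\Lambda\mathbf{f}$ by a second application of Proposition~\ref{propos:spars-res} to $(\widehat{\mathbf{A}}_\Lambda)^{-1}$, and finally uses $\mathbf{f}=\mathbf{A}\mathbf{u}$ and a third application. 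Each application degrades the exponents, so the paper ends with $\bar t=t/(1+3t)$ and $\bar\eta=(1/2)^{t/(1+2t)}\zeta(t/(1+2t))\zeta(t/(1+t))\zeta(t)\,\eta$. Your identity $\mathbf{r}(u_\Lambda)=\mathbf{Q}_\Lambda\mathbf{B}_\Lambda\mathbf{Q}_\Lambda\mathbf{u}$ with $\mathbf{B}_\Lambda=\mathbf{A}-\mathbf{A}\widehat{\mathbf{A}}_\Lambda^{-1}\mathbf{P}_\Lambda\mathbf{A}$ is algebraically correct (I checked the subtraction step), the rearrangement of $\mathbf{Q}_\Lambda\mathbf{u}$ is indeed dominated termwise by that of $\mathbf{u}$, and the convolution estimate $\sum_k e^{-\alpha|m-k|}e^{-\beta|k-n|}\lesssim(1+|m-n|)e^{-\min(\alpha,\beta)|m-n|}$ gives the product closure of $\mathcal{D}_e$ with arbitrarily small loss; since $\eta<\bar\eta_L$ you can place $\eta^*$ strictly between them and invoke Proposition~\ref{propos:spars-res} exactly once, obtaining $\bar t=t/(1+t)$ and $\bar\eta=\zeta(t)\eta$ --- strictly better exponents than the paper's, hence a stronger conclusion (the proposition only asserts existence of some admissible $\bar\eta,\bar t$, so both are valid proofs). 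The one point you correctly flag as the main obstacle --- the $\Lambda$-uniform membership $\widehat{\mathbf{A}}_\Lambda^{-1}\in\mathcal{D}_e(\bar\eta_L)$, i.e.\ that condition \eqref{restriction-cL} survives replacing the diagonal minimum by $\min(\min_\ell a_{\ell,\ell},1)$ --- is not an extra cost of your route: the paper's proof leans on exactly the same fact through Property~\ref{prop:inf-matrix}, which is stated there without proof. What the paper's route buys is that it works only with objects already in play ($\mathbf{u}_\Lambda$, $\mathbf{f}$) and needs no closure properties of $\mathcal{D}_e$ under products; what yours buys is a single application of the key mapping lemma and visibly less degradation of the sparsity parameters.
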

\begin{proof}
\bnew{This} is an adaptation of a similar \bnew{proof} in the periodic case given in \cite[Proposition 5.4]{CNV:2011}; we report here the details for completeness. Assume for the moment we are given $\bar{\eta}$ and $\bar{t}$. 
By using Proposition \ref{propos:spars-res}, 
\bnew{i.e. $\|\mathbf{A} \bf v\|_{\ell_G^{\bar\eta,\bar t}} 
\lesssim \|\bf v\|_{\ell_G^{\bar\eta_1,\bar t_1}}$}, and Property \ref{L:nlg2}, we get
\begin{equation}\label{eq:aux-2}
\begin{split}
\| {\bf r}_\Lambda\|_{\ell_G^{\bar{\eta},\bar{t}}(\mathbb{N}_2)} = \| \mathbf{A}({\bf u} - {\bf u}_\Lambda )\|_{\ell_G^{\bar{\eta},\bar{t}}(\mathbb{N}_2)} 
&\lesssim \| {\bf u} - {\bf u}_\Lambda \|_{\ell_G^{{\eta_1},{t_1}}(\mathbb{N}_2)}\\ 
&\lesssim {\| {\bf u} \|_{\ell_G^{2^{t_1}{\eta_1},{t_1}}(\mathbb{N}_2)} + 
\| {\bf u}_\Lambda \|_{\ell_G^{2^{t_1}{\eta_1},{t_1}}(\mathbb{N}_2)}},
\end{split}
 \end{equation}
 where $t_1$ and $\eta_1$ are defined by the relations 
 \[
 \bar t =\frac{t_1}{1+t_1}<t_1,\qquad  \bar\eta=\zeta(t_1)\eta_1 \ .
 \]
 From \eqref{eq:inf-pb-galerkin} we have $ {\bf u}_\Lambda = ({\widehat{\mathbf{A}}_\Lambda})^{-1} (\mathbf{P}_\Lambda{\bf f})$. 
Using Property \ref{prop:inf-matrix} and applying Proposition \ref{propos:spars-res} to $(\widehat{\mathbf{A}}_\Lambda)^{-1}$ we get 
\[
\|{\bf u}_\Lambda\|_{\ell_G^{2^{t_1}{\eta_1},{t_1}}(\mathbb{N}_2)}   = 
\|{\bf \widehat{u}}\|_{\ell_G^{2^{t_1}{\eta_1},{t_1}}(\mathbb{N}_2)} =
\|({\widehat{\mathbf{A}}_\Lambda})^{-1} (\mathbf{P}_\Lambda{\bf f})\|_{\ell_G^{2^{t_1}{\eta_1},{t_1}}(\mathbb{N}_2)}  \lesssim 
\| \mathbf{P}_\Lambda{\bf f} \|_{\ell_G^{{\eta_2},{t_2}}(\mathbb{N}_2)}  \leq \| {\bf f} \|_{\ell_G^{{\eta_2},{t_2}}(\mathbb{N}_2)}  \;, 
\]
with 
\[
 2^{t_1}\eta_1={\zeta(t_2)\eta_2 < \eta_2} \ , \qquad 
 {t_1}=\frac{t_2}{1+t_2}<t_2 \ .
 \]
By substituting the above inequality  into \eqref{eq:aux-2} and  using again Proposition \ref{propos:spars-res} we get 
\begin{equation}
\begin{split}
\| {\bf r}_\Lambda\|_{\ell_G^{\bar{\eta},\bar{t}}(\mathbb{N}_2)}
&\lesssim 
\| {\bf u} \|_{\ell_G^{2^{t_1}{\eta_1},{t_1}}(\mathbb{N}_2)} +
\| {\bf f} \|_{\ell_G^{{\eta_2},{t_2}}(\mathbb{N}_2)}\\ 
&= \| {\bf u} \|_{\ell_G^{2^{t_1}{\eta_1},{t_1}}(\mathbb{N}_2)} +
\| \mathbf{A}{\bf u} \|_{\ell_G^{{\eta_2},{t_2}}(\mathbb{N}_2)}
\lesssim  
\| {\bf u} \|_{\ell_G^{\eta,t}(\mathbb{N}_2)}
\end{split}
\end{equation}
where
\[
\eta_2={\zeta(t)\eta  <\eta}\ , \qquad 
 {t_2}=\frac{t}{1+t}<t \ .
 \]
This shows that the \bnew{assertion} holds true for the choice
\[
 \bar{\eta}=\Big(\frac12\Big)^{\frac{t}{1+2t}}
 \zeta \Big(\frac{t}{1+2t}\Big)
 \zeta \Big(\frac{t}{1+t}\Big)
 \zeta (t)
 \eta,
\qquad
\bar t = \frac{t}{1+3t}.
\]
It remains to verify the assumptions of Proposition
\ref{propos:spars-res} when $\bA$ is dense. We note that there holds  
\[
t_1 = \frac{t}{1+2t} < t_2 = \frac{t}{1+t} < t.
\]
Moreover, using $\eta_1<2^{t_1}\eta_1<\eta_2<\eta$ 
and $\eta_L \ge \bar\eta_L > \eta$ yields
\[
\eta< \eta_L, 
\qquad
\eta_1 < \eta_L,
\qquad
\eta_2 < \bar\eta_L,
\]
which are the required conditions to apply Proposition 
\ref{propos:spars-res} when $\bA$ is dense. This concludes the proof.
\end{proof}

%%%%%%%%%

%-----------------------------------------------------------------------------------
\section{The \bnew{predictor-corrector} adaptive algorithm}\label{subsec:aggress}
%-----------------------------------------------------------------------------------
In this section we study a variant (named {\bf PC-ADLEG}) of the ideal algorithm 
{\bf ADLEG}  introduced above.
Motivated by the enlightening discussion at the end of Section \ref{sec:plain-adapt-alg} 
and the subsequent results of Sections \ref{sec:nl} and
\ref{sec:spars-res}, we devise each iteration of the algorithm as
formed by a predictor step followed by a corrector step. The predictor
step guarantees an arbitrarily large error reduction (by suitably
enriching the output set from the D\"orfler procedure). This step is
driven by the sparsity class of the residual and  so, in view of
Proposition \ref{prop:unif-bound-res-exp} it does not guarantee
optimality with respect \bnew{to} the sparsity class of the exact solution.
The corrector step is realized by introducing a coarsening procedure which removes the smallest components of the output of the predictor step, in such a way to guarantee optimality.
\subsection{Enrichment}
We introduce the procedure {\bf ENRICH} defined as follows: 
\begin{itemize}
\item $\Lambda^* := \text{\bf ENRICH}(\Lambda,J)$ \\
Given an integer $J \geq 0$ and a finite set $\Lambda \subset \mathbb{N}_2$, the output is the set
$$
\Lambda^* := 
\{ k \in \mathbb{N}_2\ : \ \text{ there exists } \ell \in \Lambda \text{  such that } |k - \ell| \leq J \} \;.
$$
\end{itemize}
Note that since the procedure adds a $1$-dimensional ball of radius $J$ around each point of $\Lambda$, the cardinality
of the new set $\Lambda^*$ can be estimated as
\begin{equation}\label{eq:estim-enrich}
|\Lambda^*| \leq 2 J |\Lambda|\; .
\end{equation}

\bnew{Recall now that $\psi_{\bf A} (J,\eta)=C_{\bf A} {\rm e}^{-\eta_L J}$ 
from Property \eqref{prop:matrix-estimate} . Let}
$J_\theta>0$ be defined as 
\begin{equation}\label{eq:aggr2}
J_\theta:=\min\left\{ J\in\mathbb{N}:~\psi_{\mathbf{A}^{-1}} (J_\theta,\bar{\eta}_L)=C_{\mathbf{A}^{-1}} {\rm e}^{-\bar{\eta}_L J_\theta} \leq \sqrt{\frac{1-\theta^2}{\alpha_* \alpha^*}} ~\right\}\;.
\end{equation}
\begin{itemize}
\item $\Lambda^* := \text{\bf E-D\"ORFLER}(r, \theta)$\\
Given $\theta \in (0,1)$ and an element $r \in H^{-1}(I)$, 
the ouput $\Lambda^* \subset \mathbb{N}_2$ is defined by the sequence 
\begin{equation}\label{eq:aggr1}
\begin{split}
\widetilde{\Lambda}:=& \text{\bf D\"ORFLER}(r, \theta)\\
\Lambda^{*}:=& \text{\bf ENRICH}(\widetilde{\Lambda},J_\theta) \;.
\end{split}
\end{equation}
\end{itemize}
\subsection{Coarsening}\label{sec:coarsening}

We introduce the new procedure {\bf COARSE} defined as follows: 
\begin{itemize}
\item $\Lambda := {\bf COARSE}(w, \epsilon)$\\
Given a function $w \in V_{\Lambda^*}$ for some finite index set $\Lambda^*$, and an accuracy $\epsilon>0$
which is known to satisfy $\Vert u -  w \Vert \leq  \epsilon$,
 the output $\Lambda \subseteq \Lambda^*$ is a set of minimal cardinality such that
\begin{equation}\label{eq:def-coarse}
\Vert w - P_\Lambda w \Vert \leq 2 \epsilon \;.
\end{equation}
\end{itemize}

The following result shows that the cardinality $|\Lambda|$ 
is \rnew{optimal relative} to the sparsity class of $u$. We refer to
\rnew{Cohen \cite[Theorem 4.9.1]{Cohen-book:03} and Stevenson 
\cite[Proposition 3.2]{Stevenson:09}.}
\begin{theorem}[\rnew{cardinality} after coarsening]\label{T:coarsening}
Let $\vare>0$ and let $u \in {\mathcal A}^{\eta,t}_G$. %
Then 
\[
\vert \Lambda \vert  \le \frac{1}{\ \eta^{1/t}} 
\left(\log\frac{\Vert u \Vert_{{\mathcal A}^{\eta,t}_G}}{\vare}\right)^{1/t}\!\!\!+1 \;.
\]
\end{theorem}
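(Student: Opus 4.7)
The plan is to compare the coarsened set $\Lambda$ against the support of an optimal best $N$-term approximation of the exact solution $u$ and exploit the minimality of $\Lambda$.

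First, I choose the integer $N$ that realizes the sparsity bound, namely the smallest $N \geq 1$ such that $\|u\|_{\mathcal{A}_G^{\eta,t}} \, e^{-\eta N^t} \le \varepsilon$. By the definition of $\mathcal{A}_G^{\eta,t}$ we have $E_N(u) \le \|u\|_{\mathcal{A}_G^{\eta,t}} \, e^{-\eta N^t} \le \varepsilon$, and inverting the defining inequality yields
\[
N \;\le\; \frac{1}{\eta^{1/t}}\left(\log \frac{\|u\|_{\mathcal{A}_G^{\eta,t}}}{\varepsilon}\right)^{\!1/t} + 1.
\]
Let $\Lambda_N \subset \mathbb{N}_2$ be the support of a best $N$-term approximation of $u$, so that $u_N := P_{\Lambda_N} u$ satisfies $\|u - u_N\| \le \varepsilon$.

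Next, I show that $\Lambda_N$ is itself an admissible candidate for the coarsening procedure, up to intersecting with $\Lambda^*$ (which does not change the projection of $w$ since $w \in V_{\Lambda^*}$). Using that $I - P_{\Lambda_N}$ is an orthogonal projector on $\ell^2(\mathbb{N}_2)$, hence of norm at most one, and the assumption $\|u - w\| \le \varepsilon$,
\[
\|w - P_{\Lambda_N} w\| \;=\; \|(I-P_{\Lambda_N})(w - u) + (I - P_{\Lambda_N})u\|
\;\le\; \|w-u\| + \|u - u_N\| \;\le\; 2\varepsilon.
\]
Therefore the set $\tilde\Lambda := \Lambda_N \cap \Lambda^*$ satisfies $\tilde\Lambda \subseteq \Lambda^*$, $\|w - P_{\tilde\Lambda} w\| \le 2\varepsilon$, and $|\tilde\Lambda| \le |\Lambda_N| \le N$.

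Finally, the output of \textbf{COARSE} is, by definition, a subset of $\Lambda^*$ of \emph{minimal} cardinality meeting the coarsening tolerance $2\varepsilon$; hence $|\Lambda| \le |\tilde\Lambda| \le N$, and combining with the bound on $N$ gives the claim. The only delicate point is the bound $\|w - P_{\Lambda_N} w\| \le 2\varepsilon$, which works with the factor $2$ precisely because one uses the orthogonal splitting $w = (w-u) + u$ and the fact that $I - P_{\Lambda_N}$ is a contraction; estimating the two terms separately with the triangle inequality (as in $\|w - P_{\Lambda_N} w\| \le \|w - u\| + \|u - u_N\| + \|P_{\Lambda_N}(u-w)\|$) would have yielded the weaker $3\varepsilon$ and forced a mismatch with the prescribed tolerance $2\varepsilon$ in the definition of \textbf{COARSE}.
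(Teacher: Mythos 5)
Your argument is correct and is essentially the standard coarsening-cardinality proof that the paper delegates to Cohen \cite[Theorem 4.9.1]{Cohen-book:03} and Stevenson \cite[Proposition 3.2]{Stevenson:09}: compare against the support of a best $N$-term approximation of $u$ with $E_N(u)\le\vare$, use that $I-P_{\Lambda_N}$ is an orthogonal projection to get the $2\vare$ tolerance, and invoke minimality of the output of {\bf COARSE}. The grouping $(I-P_{\Lambda_N})(w-u)+(I-P_{\Lambda_N})u$ is exactly the right way to avoid the $3\vare$ loss, so nothing is missing.
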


The approximation error obtained after a call of {\bf COARSE} is estimated as follows.
\begin{property}[{error after coarsening}]\label{prop:cons-coarse}
The procedure {\bf COARSE} guarantees the bounds 
\begin{equation}\label{eq:def-coarse-bis}
\Vert u - P_\Lambda w \Vert \leq 3 \epsilon 
\end{equation}
and, for the Galerkin solution $u_\Lambda \in V_\Lambda$,
\begin{equation}\label{eq:def-coarse-ter}
\tvert u - u_\Lambda  \tvert \leq 3 \sqrt{\alpha^*}\epsilon \;.
\end{equation}
\end{property}
\proof
The first bound is trivial, the second one follows from the minimality property of the Galerkin solution 
in the energy norm and from (\ref{eq:four.1bis}):
$$
\tvert u - u_\Lambda \tvert \leq \tvert u - P_{\Lambda}w \tvert \leq \sqrt{\alpha^*}
\Vert u-P_{\Lambda} w \Vert \leq 3 \sqrt{\alpha^*} \epsilon \;. \qquad \quad \endproof
$$

\subsection{PC-ADLEG: a predictor-corrector version of ADLEG}\label{sec:moddorfler}

Given two parameters $\theta \in (0,1)$ and $tol \in [0,1)$, we define the following adaptive algorithm. 

\medskip
{\bf Algorithm PC-ADLEG}($\theta, \ tol$)
\begin{itemize}
\item[\ ] Set $r_0:=f$, $\Lambda_0:=\emptyset$, $n=-1$
\item[\ ] do
	\begin{itemize}
	\item[\ ] $n \leftarrow n+1$
	\item[\ ] $\widehat{\partial\Lambda}_{n}:= \text{\bf E-D\"ORFLER}(r_{n}, \theta)$		
        \item[\ ] $\widehat\Lambda_{n+1}:=
			\Lambda_{n} \cup \widehat{\partial\Lambda}_{n}$
	\item[\ ] $\widehat{u}_{n+1}:= {\bf GAL}(\widehat\Lambda_{n})$
	%\item[\ ] $\widehat{r}_{n+1}:= {\bf RES}(\widehat{u}_{n+1})$
	\item[\ ] $\Lambda_{n+1}:=
		{\bf COARSE}\left(\widehat{u}_{n+1}, \bnew{\frac{2}{\alpha_*}
		\sqrt{1-\theta^2}\|r_n\|}\right)$
	\item[\ ] $u_{n+1}:={\bf GAL}(\Lambda_{n+1})$
	\item[\ ] $r_{n+1}:={\bf RES}(u_{n+1})$
	\end{itemize} 
\item[\ ]  while $\Vert r_{n+1} \Vert > tol $
\end{itemize}

\begin{theorem}[{contraction property of {\bf PC-ADLEG}}]\label{teo:four2}
Let \bnew{$0<\theta<1$ be chosen so that}
\begin{equation}\label{eq:def_rhotheta}
\bnew{\rho=\rho(\theta)= 6\frac{{\alpha^*}}{\alpha_*} \sqrt{1-\theta^2}<1\;.}
\end{equation}
{If} the assumptions of Property \ref{prop:inverse.matrix-estimate} are fulfilled, the sequence of errors $u-u_n$ generated for $n\geq 0$ by the algorithm satisfies the inequality
$$
  \tvert u-u_{n+1} \tvert \leq \rho \tvert u-u_n \tvert \;.
$$ 
Thus, \bnew{for} any $tol>0$ the algorithm terminates in a finite number of iterations, whereas for $tol=0$
the sequence $u_n$ converges to $u$ in $H^1(I)$ as $n \to \infty$.
\end{theorem}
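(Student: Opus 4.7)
The plan is to reduce the contraction to a single predictor-step estimate and then appeal to the coarsening Property. Set $\epsilon_n := \frac{2}{\alpha_*}\sqrt{1-\theta^2}\|r_n\|$; the argument then proceeds in three steps.

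First, I would establish the \emph{key predictor bound} $\|u - \widehat u_{n+1}\| \leq \epsilon_n$. Introduce the auxiliary infinite-dimensional correction $\bm{\delta}^* := \mathbf{A}^{-1}\mathbf{P}_{\widetilde\Lambda}\mathbf{r}_n$, where $\widetilde\Lambda \subseteq \widehat\Lambda_{n+1}$ is the D\"orfler output sitting inside the enriched set. Since $\mathbf{A}\bm{\delta}^* = \mathbf{P}_{\widetilde\Lambda}\mathbf{r}_n$, one has $\mathbf{r}_n - \mathbf{A}\bm{\delta}^* = \mathbf{Q}_{\widetilde\Lambda}\mathbf{r}_n$, which the D\"orfler property bounds by $\sqrt{1-\theta^2}\|r_n\|$; hence (\ref{eq:four.2.1}) applied to $u_n + \delta^*$ yields
\[
\|u - u_n - \delta^*\| \leq \tfrac{1}{\alpha_*}\sqrt{1-\theta^2}\|r_n\|.
\]
Next, the quasi-sparsity of $\mathbf{A}^{-1}$, via Property \ref{prop:matrix-estimate} and the choice of $J_\theta$ in (\ref{eq:aggr2}), gives
\[
\|\bm{\delta}^* - \mathbf{P}_{\widehat\Lambda_{n+1}}\bm{\delta}^*\| \leq \psi_{\mathbf{A}^{-1}}(J_\theta, \bar\eta_L)\|r_n\| \leq \tfrac{1}{\sqrt{\alpha_*\alpha^*}}\sqrt{1-\theta^2}\|r_n\|.
\]
Since $u_n + P_{\widehat\Lambda_{n+1}}\delta^* \in V_{\widehat\Lambda_{n+1}}$, the Galerkin best-approximation property of $\widehat u_{n+1}$ in the energy norm, together with the norm equivalence (\ref{eq:four.1bis}), combines these two estimates by triangle inequality into the target bound on $\|u - \widehat u_{n+1}\|$.

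Second, since $\|u - \widehat u_{n+1}\| \leq \epsilon_n$ makes $\epsilon_n$ an admissible tolerance for the call ${\bf COARSE}(\widehat u_{n+1}, \epsilon_n)$, Property \ref{prop:cons-coarse} yields $\tvert u - u_{n+1} \tvert \leq 3\sqrt{\alpha^*}\epsilon_n = \tfrac{6\sqrt{\alpha^*}}{\alpha_*}\sqrt{1-\theta^2}\|r_n\|$. The a-posteriori bound $\|r_n\| \leq \sqrt{\alpha^*}\tvert u - u_n \tvert$ from (\ref{eq:four.2.1bis}) then delivers
\[
\tvert u - u_{n+1} \tvert \leq \tfrac{6\alpha^*}{\alpha_*}\sqrt{1-\theta^2}\,\tvert u - u_n \tvert = \rho\,\tvert u - u_n \tvert,
\]
with $\rho < 1$ by hypothesis. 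Iteration produces geometric convergence, and for $tol > 0$ the norm equivalence (\ref{eq:four.2.1bis}) forces $\|r_n\| \to 0$, so the algorithm terminates in finitely many iterations.

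The main obstacle is the first step: coordinating the D\"orfler residual reduction $\sqrt{1-\theta^2}$ with the $\mathbf{A}^{-1}$ truncation budget $\psi_{\mathbf{A}^{-1}}(J_\theta, \bar\eta_L)$ so that the triangle inequality delivers the precise constant $2/\alpha_*$, while carefully tracking the factors of $\alpha_*, \alpha^*$ incurred when passing between the $H^1_0$, $H^{-1}$, and energy norms. The specific form of $J_\theta$ in (\ref{eq:aggr2}) is calibrated exactly to make this bookkeeping close, and the overall structure parallels the predictor-corrector analysis of the Fourier algorithm in \cite{CNV:2011}.
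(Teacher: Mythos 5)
Your proposal is essentially the paper's own proof: your $\bm{\delta}^*=\mathbf{A}^{-1}\mathbf{P}_{\widetilde\Lambda}\mathbf{r}_n$ is exactly the auxiliary function $w_n=L^{-1}g_n$ that the paper introduces, split into the part $y_n$ supported in $\widehat\Lambda_{n+1}$ and the tail $z_n$ controlled by the truncation bound \eqref{eq:trunc-invmatr-err} together with the calibration \eqref{eq:aggr2} of $J_\theta$, followed by the same Galerkin-minimality, coarsening and a-posteriori steps. The one bookkeeping point to settle is that the first estimate must be taken directly in the energy norm via \eqref{eq:four.2.1bis}, i.e. $\tvert u-u_n-\delta^*\tvert\le\frac{1}{\sqrt{\alpha_*}}\sqrt{1-\theta^2}\,\Vert r_n\Vert$, rather than in the $H^1_0$-norm via \eqref{eq:four.2.1} and then converting (which costs an extra factor $\sqrt{\alpha^*/\alpha_*}$); with that choice each of the two terms contributes $\frac{1}{\sqrt{\alpha_*}}\sqrt{1-\theta^2}\,\Vert r_n\Vert$ and the constant $2/\alpha_*$ comes out exactly as in the paper.
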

\begin{proof}
At the $n$-th \bnew{step, we have
$\widehat{\partial\Lambda}_n = \text{\bf E-D\"ORFLER}(r_n\theta)$ and
$\widehat{\Lambda_{n+1}}=\Lambda_{n} \cup \partial\Lambda_{n}$, where} 
\begin{equation}\label{eq:aggr1}
\begin{split}
\widetilde{\partial\Lambda}_{n} =& ~\text{\bf D\"ORFLER}(r_n, \theta)\\
\bnew{\widehat{\partial\Lambda}_{n}} =& ~\text{\bf ENRICH}(\widetilde{\partial\Lambda}_{n},J_\theta) \;.
\end{split}
\end{equation}
We recall that the set $\widetilde{\partial\Lambda}_n$ is such that 
$g_n= P_{\widetilde{\partial\Lambda}_n} r_n$ satisfies 
$$
\Vert r_n- g_n \Vert \leq \sqrt{1-\theta^2}  \Vert r_n \Vert 
$$
(see (\ref{eq:four.2.5.5bis})). Let $w_n \in V$ be the solution of $L w_n = g_n$, which in general
will have infinitely many components, and let us split it as
$$
w_n= \bnew{P_{\widehat\Lambda_{n+1}} w_n +
  P_{\widehat\Lambda_{n+1}^c} w_n =: y_n + z_n \in V_{\widehat\Lambda_{n+1}} \oplus 
 V_{\widehat\Lambda_{n+1}^c} \;.}
$$
Then, by the minimality property of the Galerkin solution 
\bnew{$\widehat u_{n+1}\in V_{\widehat \Lambda_{n+1}}$} in the energy norm, and by (\ref{eq:four.1bis})
and (\ref{eq:four.2.1bis}), one has
\begin{eqnarray*}
\tvert u - \widehat{u}_{n+1} \tvert &\leq&  \tvert u - (u_{n}+y_{n}) \tvert \leq  
 \tvert u- u_n - w_{n} + z_{n} \tvert \\
&\leq& \frac1{\sqrt{\alpha_*}} \Vert L(u- u_n - w_{n}) \Vert + \sqrt{\alpha^*}\Vert z_{n} \Vert
= \frac1{\sqrt{\alpha_*}} \Vert r_n - g_{n} \Vert  + \sqrt{\alpha^*} \Vert z_{n} \Vert \;.
\end{eqnarray*}
Thus,
$$
\tvert u - \widehat{u}_{n+1} \tvert \leq \frac{1}{\sqrt{\alpha_*}}\sqrt{(1-\theta^2)} \, \Vert r_n \Vert 
+ \sqrt{\alpha^*} \Vert z_{n}\Vert \;.
$$
\bnew{Since $z_n= \big( P_{\widehat\Lambda_{n+1}^c} L^{-1} P_{\widetilde{\partial\Lambda}_n} \big) r_n $}, observing that 

$$ k \in \bnew{\widehat\Lambda_{n+1}^c} \quad \text{and} \quad \ell \in \widetilde{\partial\Lambda}_n\qquad \Rightarrow \qquad
|k - \ell | > J_\theta \;,
$$
we have 
$$
\Vert \bnew{P_{\widehat\Lambda_{n+1}^c}} L^{-1} P_{\widetilde{\partial\Lambda}_n} \Vert \leq
\Vert \mathbf{A}^{-1}-(\mathbf{A}^{-1})_{J_\theta} \Vert \leq 
\psi_{\mathbf{A}^{-1}} (J_\theta,\bar{\eta}_L) \leq  \sqrt{\frac{1-\theta^2}{\alpha_* \alpha^*}}\;,
$$ 
where we have used \eqref{eq:aggr2}. Thus, we obtain
\begin{equation}\label{eq:aggr_error_reduct}
\tvert u - \widehat{u}_{n+1} \tvert \leq \bnew{\frac{2}{\sqrt{\alpha_*}} \sqrt{1-\theta^2} \, \Vert r_n\Vert} 
\end{equation}
or, equivalently, 
\begin{equation}\label{eq:aggr_error_reduct}
\| u - \widehat{u}_{n+1} \| \leq \bnew{\frac{2}{{\alpha_*}} \sqrt{1-\theta^2} \, \Vert r_n \Vert\;.}
\end{equation}
Since the right-hand side \bnew{of \eqref{eq:aggr_error_reduct}}
is precisely the parameter $\epsilon_n$
fed to the procedure {\bf COARSE}, \bnew{Property \eqref{prop:cons-coarse} implies}
\begin{equation}\label{aux:opt}
\tvert u- u_{n+1} \tvert \leq \bnew{6\frac{\sqrt{\alpha^*}}{\alpha_*} \sqrt{1-\theta^2} \| r_n\| 
\leq 6\frac{{\alpha^*}}{\alpha_*} \sqrt{1-\theta^2} \tvert u-u_n\tvert   \;,}
\end{equation}
\bnew{for the Galerkin solution $u_{n+1}\in V_{\Lambda_{n+1}}$. The
  assertion thus follows immediately.}
\end{proof}

The rest of the paper will be devoted to investigating complexity issues for the sequence of approximations 
$u_n=u_{\Lambda_n}$ generated by {\bf PC-ADLEG}. In particular, we wish to estimate the cardinality
of each $\Lambda_n$ and check whether its growth is ``optimal'' with respect to the sparsity class 
${\mathcal A}_\phi$ of the exact solution, in the sense that $|\Lambda_n|$ is comparable to 
the cardinality of the index set of the best approximation of $u$ yielding the same error
$\Vert u - u_n \Vert$.

\begin{theorem}[{cardinality of {\bf PC-ADLEG}}]\label{teo:pc-ADLEG}
Suppose that $u \in {\mathcal A}^{\eta,t}_G$, for some $\eta >0$ and $t \in (0,1]$. 
Then, there exists a constant $C>1$ such that the cardinality of the set 
$\Lambda_n$ of the active degrees of freedom produced by {\bf PC-ADLEG} satisfies the bound 
$$
|\Lambda_n| \leq \frac{1}{\eta^{1/t}}
\left( \log  \frac{\Vert u \Vert_{{\mathcal A}^{\eta,t}_G}}{\Vert u-u_n \Vert} + \log C \right)^{1/t} +1\;, 
{\qquad\forall\ n\ge0.}
$$
If, in addition, the assumptions of Proposition \ref{prop:unif-bound-res-exp} are satisfied, then 
the cardinality of the intermediate sets $\widehat\Lambda_{n+1}$ activated in the predictor step
can be estimated as
$$
|\widehat\Lambda_{n+1}| \leq | \Lambda_n | +
 \frac{2 J_\theta}{\bar{\eta}^{1/\bar{t}}}
\left( \log  \frac{\Vert u \Vert_{{\mathcal A}^{\eta,t}_G}}{\Vert u-u_{n+1} \Vert}  + \log C \right)^{1/\bar{t}} + 2J_\theta\;, 
{\qquad\forall\ n\ge0\;,}
$$
where $J_\theta$ is defined in \eqref{eq:aggr2} and $\bar{\eta}\leq \eta$, $\bar{t}\leq t$ 
are the parameters which occur in the thesis of Proposition \ref{prop:unif-bound-res-exp}.
\end{theorem}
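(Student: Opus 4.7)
The plan is to treat the two bounds separately, exploiting that each $\Lambda_n$ is the output of \textbf{COARSE} whereas each $\widehat\Lambda_{n+1}$ is the union of $\Lambda_n$ with the enriched D\"orfler set.

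\textbf{Bound on $|\Lambda_n|$.} First I would observe that, by construction, $\Lambda_n = \textbf{COARSE}(\widehat u_n,\epsilon_{n-1})$ where $\epsilon_{n-1}=\tfrac{2}{\alpha_*}\sqrt{1-\theta^2}\,\|r_{n-1}\|$ and $\widehat u_n$ satisfies $\|u-\widehat u_n\|\le \epsilon_{n-1}$ (this latter bound is precisely \eqref{eq:aggr_error_reduct} from the proof of Theorem~\ref{teo:four2}). Applying Theorem~\ref{T:coarsening} with $\vare=\epsilon_{n-1}$ yields
\[
|\Lambda_n|\le \frac{1}{\eta^{1/t}}\Big(\log\frac{\|u\|_{{\mathcal A}^{\eta,t}_G}}{\epsilon_{n-1}}\Big)^{1/t}+1.
\]
It then remains to replace $\epsilon_{n-1}$ by $\|u-u_n\|$ at the cost of an additive $\log C$. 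By Property~\ref{prop:cons-coarse} we have $\tvert u-u_n\tvert\le 3\sqrt{\alpha^*}\epsilon_{n-1}$, hence $\|u-u_n\|\le 3\sqrt{\alpha^*/\alpha_*}\,\epsilon_{n-1}$, so $1/\epsilon_{n-1}\le C/\|u-u_n\|$ with $C=3\sqrt{\alpha^*/\alpha_*}$, and the asserted estimate follows. The case $n=0$ is trivial since $\Lambda_0=\emptyset$.

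\textbf{Bound on $|\widehat\Lambda_{n+1}|$.} Since $\widehat\Lambda_{n+1}=\Lambda_n\cup\widehat{\partial\Lambda}_n$ I would split
\[
|\widehat\Lambda_{n+1}|\le |\Lambda_n|+|\widehat{\partial\Lambda}_n|,
\]
and bound the second term by \eqref{eq:estim-enrich}: $|\widehat{\partial\Lambda}_n|\le 2 J_\theta\,|\widetilde{\partial\Lambda}_n|$. Next, since $r_n\in {\mathcal A}^{\bar\eta,\bar t}_G$ with $\|r_n\|_{{\mathcal A}^{\bar\eta,\bar t}_G}\lesssim \|u\|_{{\mathcal A}^{\eta,t}_G}$ by Proposition~\ref{prop:unif-bound-res-exp}, estimate \eqref{aux:residual} applied to the output $\widetilde{\partial\Lambda}_n$ of the plain D\"orfler step gives
\[
|\widetilde{\partial\Lambda}_n|\le \frac{1}{\bar\eta^{1/\bar t}}\Big(\log\frac{\|r_n\|_{{\mathcal A}^{\bar\eta,\bar t}_G}}{\sqrt{1-\theta^2}\,\|r_n\|}\Big)^{1/\bar t}+1.
\]
To pass from $\|r_n\|$ to $\|u-u_{n+1}\|$, I would use first the a posteriori equivalence \eqref{eq:four.2.1}, which gives $\|r_n\|\ge \alpha_*\|u-u_n\|$, and then the contraction bound of Theorem~\ref{teo:four2}, $\|u-u_{n+1}\|\le (\sqrt{\alpha^*/\alpha_*})\rho\,\|u-u_n\|$, to absorb the switch of index into a constant factor. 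Combining these facts yields $1/\|r_n\|\le C'/\|u-u_{n+1}\|$ for a constant $C'$ depending only on $\alpha_*,\alpha^*,\theta$. Substituting into the previous display and enlarging $\log C$ if necessary gives the stated bound with the factor $2J_\theta$ out front and the additive $2J_\theta$ term (coming from the $+1$ inside the D\"orfler estimate).

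\textbf{Main obstacle.} The mechanics are straightforward once the right tools are assembled; the only delicate point is bookkeeping of constants. One must verify that each logarithm $\log(\|u\|_{{\mathcal A}^{\eta,t}_G}/\cdot)$, after the various substitutions $\epsilon_{n-1}\leftrightarrow\|u-u_n\|$ and $\|r_n\|\leftrightarrow\|u-u_{n+1}\|$, differs from the target expression only by an additive constant that depends on $\alpha_*,\alpha^*,\theta,\rho$ but not on $n$. This allows a single constant $C>1$ to serve in both statements.
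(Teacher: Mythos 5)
Your argument is correct and follows essentially the same route as the paper: Theorem~\ref{T:coarsening} with $\vare=\epsilon_{n-1}$ plus Property~\ref{prop:cons-coarse} for $|\Lambda_n|$, and the splitting $|\widehat\Lambda_{n+1}|\le|\Lambda_n|+2J_\theta|\widetilde{\partial\Lambda}_n|$ combined with \eqref{aux:residual} and Proposition~\ref{prop:unif-bound-res-exp} for the second bound. The only cosmetic difference is that the paper passes from $\sqrt{1-\theta^2}\,\|r_n\|$ to $\|u-u_{n+1}\|$ by invoking \eqref{aux:opt} directly, whereas you recombine the contraction estimate with \eqref{eq:four.2.1}; this is the same inequality chain traversed in a different order and yields an equally valid (if slightly larger) constant $C$.
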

\begin{proof} 
\bnew{Combining} Theorem \ref{T:coarsening} with 
$\epsilon_{n}= \bnew{\frac{2}{\alpha_*}\sqrt{1-\theta^2}\|r_n\|}$
\bnew{and Property \ref{prop:cons-coarse}}, we get
\begin{eqnarray}
\vert \Lambda_{n+1} \vert  &\le& \frac{1}{\ \eta^{1/t}} 
\left(\log\frac{\Vert u \Vert_{{\mathcal A}^{\eta,t}_G}}{\epsilon_n}\right)^{1/t}\!\!\!+1
\leq\frac{1}{\eta^{1/t}}
\left( \log  \frac{\Vert u \Vert_{{\mathcal A}^{\eta,t}_G}}{\Vert u-u_{n+1} \Vert} + \log C \right)^{1/t} + 1\ .\nonumber
\end{eqnarray}
\bnew{We now estimate $\vert \widehat{\Lambda}_{n+1} \vert$.}
Recalling \eqref{eq:estim-enrich}, we have 
$\vert \bnew{\widehat{\partial\Lambda}_n}\vert \leq  2J_\theta \vert
\widetilde{\partial\Lambda}_n\vert $,
\bnew{ and using \eqref{aux:residual} together
 with Proposition \ref{propos:spars-res} and \eqref{aux:opt} yields} 
\begin{eqnarray}
\vert \widetilde{\partial\Lambda}_n\vert 
&\leq & \frac{1}{ \bar\eta^{1/\bar{t}}} \left(\log \frac{ \| r_n \|_{{\mathcal A}^{\bar\eta,t}_G}}{\sqrt{1-\theta^2}\|r_n\|}\right)^{1/\bar{t}}+1\nonumber\\
&\leq&   \frac{1}{ \bar\eta^{1/\bar{t}}} \left(\log \frac{ \| u
    \|_{{\mathcal A}^{\eta,t}_G}}{\|u-u_{n+1}\|} + \log C
\right)^{1/\bar{t}}+1\nonumber.
\end{eqnarray}
\bnew{Finally, the second assertion follows from $\vert \widehat{\Lambda}_{n+1} \vert \leq \vert \Lambda_n\vert + \vert \partial\Lambda_n\vert $.}
 \end{proof}

We observe that the cardinality of $\Lambda_n$, i.e., the set of degrees of freedom of the Galerkin solution at the end of each iteration, is optimal. On the other hand, in the case $\bar\eta<\eta$ and $\bar t< t$, 
the cardinality  of $\widehat\Lambda_{n+1}$ may grow at a faster rate than the cardinality of 
$\Lambda_n$. This is a direct consequence of the fact that, according
to Proposition \ref{prop:unif-bound-res-exp}, the sparsity class of
the residual may be worse than \rnew{that of the solution.
Moreover, the contraction constant in
\eqref{eq:def_rhotheta} is as close to $0$ as desired provided
$\theta$ is close to $1$, which is
consistent with the expected fast error decay of spectral methods.
This is a key feature of our contribution and a novel idea with
respect to the standard algebraic case; see the surveys 
\cite{NSV:09,Stevenson:09}.

Similar results to Theorems \ref{teo:four2} and 
\ref{teo:pc-ADLEG} are valid also for the
algebraic case, with optimal convergence rates only limited by solution 
regularity; they can be derived as in \cite[Theorems 3.3 and 7.2]{CNV:2011}. 
In addition, a more
conservative version of {\bf PC-ADLEG}, which avoids the procedure
{\bf ENRICH}, t
can be studied as well following  \cite[Theorem 8.1]{CNV:2011}. This
version exhibits better optimality properties than {\bf PC-ADLEG}
at the expense of a relatively large contraction factor, which is at
odds with spectral accuracy.}

%-----------------------------------------------------------------------------------
\section*{Acknowledgements}
We wish to thank Luigi Rodino for insightful discussions on Gevrey spaces.
%-----------------------------------------------------------------------------------

%%%%%%%%%%%%%%%%%%%%%%%%%%%%%%%%%%%%%%%%%%%%%%%%%%%%%%%%%%%%%%%%%%%%%%%%%%%%%%%%%%%%

\begin{thebibliography}{10}

\bibitem{Adams:1878}
J.C. Adams.
\newblock On the expression of the product of any two legendreÕs coefficients
  by means of a series of legendreÕs coefficients.
\newblock {\em Proceedings of the Royal Society of London}, 27:63--71, 1878.

\bibitem{BG:72}
M.~S. Baouendi and C.~Goulaouic.
\newblock R\'egularit\'e analytique et it\'er\'es d'op\'erateurs elliptiques
  d\'eg\'en\'er\'es; applications.
\newblock {\em J. Functional Analysis}, 9:208--248, 1972.

\bibitem{BDD:04}
P.~Binev, W.~Dahmen, and R.~DeVore.
\newblock Adaptive finite element methods with convergence rates.
\newblock {\em Numer. Math.}, 97(2):219--268, 2004.

\bibitem{Burg-Doerfler:11}
M.~B{\"u}rg and W.~D{\"o}rfler.
\newblock Convergence of an adaptive {$hp$} finite element strategy in higher
  space-dimensions.
\newblock {\em Appl. Numer. Math.}, 61(11):1132--1146, 2011.

\bibitem{CNV:2011}
C.~Canuto, Ricardo~H. Nochetto, and M.~Verani.
\newblock {A}daptive {F}ourier-{G}alerkin {M}ethods.
\newblock {\em arXiv:1201.5648, submitted}, pages 1--48, 2011.

\bibitem{Nochetto-et-al:2008}
J.~M. Cascon, C.~Kreuzer, R.~H. Nochetto, and K.~G. Siebert.
\newblock Quasi-optimal convergence rate for an adaptive finite element method.
\newblock {\em SIAM J. Numer. Anal.}, 46(5):2524--2550, 2008.

\bibitem{CDDV:1998}
A.~Cohen, W.~Dahmen, and R.~DeVore.
\newblock Adaptive wavelet methods for elliptic operator equations --
  convergence rates.
\newblock {\em Math. Comp}, 70:27--75, 1998.

\bibitem{CDN:11}
A.~Cohen, R.~DeVore, and R.H. Nochetto.
\newblock Convergence rates for afem with ${H}^{-1}$ data.
\newblock {\em Found. Comput. Math., to appear}, 2011.

\bibitem{Cohen-book:03}
Albert Cohen.
\newblock {\em Numerical analysis of wavelet methods}, volume~32 of {\em
  Studies in Mathematics and its Applications}.
\newblock North-Holland Publishing Co., Amsterdam, 2003.

\bibitem{Dahlke-Fornasier-Groechenig:2010}
S.~Dahlke, M.~Fornasier, and K.~Groechenig.
\newblock Optimal adaptive computations in the {J}affard algebra and localized
  frames.
\newblock {\em Journal of Approximation Theory}, 162(1):153 -- 185, 201.

\bibitem{Davis:1963}
Philip~J. Davis.
\newblock {\em Interpolation and approximation}.
\newblock Blaisdell Publishing Co. New York-Toronto-London, 1963.

\bibitem{dorfler:96}
W.~D{\"o}rfler.
\newblock A convergent adaptive algorithm for {P}oisson's equation.
\newblock {\em SIAM J. Numer. Anal.}, 33(3):1106--1124, 1996.

\bibitem{Doerfler-Heuveline:07}
W.~D{\"o}rfler and V.~Heuveline.
\newblock Convergence of an adaptive {$hp$} finite element strategy in one
  space dimension.
\newblock {\em Appl. Numer. Math.}, 57(10):1108--1124, 2007.

\bibitem{Jaffard:1990}
S.~Jaffard.
\newblock Propri\'et\'es des matrices "bien localis\'ees" pr\`es de leur
  diagonale et quelques applications.
\newblock {\em Annales de l'I.H.P.}, 5:461--476, 1990.

\bibitem{MNS:00}
P.~Morin, R.~H. Nochetto, and K.~G. Siebert.
\newblock Data oscillation and convergence of adaptive {FEM}.
\newblock {\em SIAM J. Numer. Anal.}, 38(2):466--488 (electronic), 2000.

\bibitem{NSV:09}
R.~H. Nochetto, K.~G. Siebert, and A.~Veeser.
\newblock Theory of adaptive finite element methods: an introduction.
\newblock In {\em Multiscale, nonlinear and adaptive approximation}, pages
  409--542. Springer, Berlin, 2009.

\bibitem{Rachowicz-etal:06}
W.~Rachowicz, D.~Pardo, and L.~Demkowicz.
\newblock Fully automatic hp-adaptivity in three dimensions.
\newblock {\em Computer Methods in Applied Mechanics and Engineering},
  195(37-40):4816--4842, 2006.

\bibitem{Schmidt-Siebert:00}
Alfred Schmidt and Kunibert~G. Siebert.
\newblock A posteriori estimators for the {$h$}-{$p$} version of the finite
  element method in 1{D}.
\newblock {\em Appl. Numer. Math.}, 35(1):43--66, 2000.

\bibitem{Stevenson:2007}
R.~Stevenson.
\newblock Optimality of a standard adaptive finite element method.
\newblock {\em Found. Comput. Math.}, 7(2):245--269, 2007.

\bibitem{Stevenson:09}
Rob Stevenson.
\newblock Adaptive wavelet methods for solving operator equations: an overview.
\newblock In {\em Multiscale, nonlinear and adaptive approximation}, pages
  543--597. Springer, Berlin, 2009.

\end{thebibliography}
\end{document}